\documentclass{amsproc}
\usepackage[all]{xy}
\usepackage{amscd}
\usepackage{amssymb,amsmath}
\usepackage[usenames]{color}
\usepackage{comment}
\usepackage{framed}
\definecolor{shadecolor}{gray}{0.875}
\specialcomment{personal}{\begin{shaded}}{\end{shaded}}
%\excludecomment{personal}

\newtheorem{theorem}{Theorem}[section]
\newtheorem{lemma}[theorem]{Lemma}
\newtheorem{conjecture}[theorem]{Conjecture}
\newtheorem{corollary}[theorem]{Corollary}
\newtheorem{proposition}[theorem]{Proposition}

\theoremstyle{definition}

\theoremstyle{remark}
\newtheorem{remark}[theorem]{Remark}

\numberwithin{equation}{section}

\renewcommand{\color}[1]{ } % macht alle Farben schwarz

\renewcommand{\det}{\mathrm{det}}

\renewcommand{\L}{\ifmmode {\mathcal{L}}\else$\mathcal{L}$\ \fi}

\newcommand{\bbC}{\ifmmode {\mathbb{C}}\else$\mathbb{C}$\ \fi}
\newcommand{\bbR}{\ifmmode {\mathbb{R}}\else$\mathbb{R}$\ \fi}

\newcommand{\be}{\begin{equation}}
\newcommand{\ee}{\end{equation}}

\newcommand{\fpbar}{\ifmmode {\overline{\mathbb{F}_p}}\else$\mathbb{F}_p$\ \fi}

\newcommand{\fp}{\ifmmode {\mathbb{F}_p}\else$\mathbb{F}_p$\ \fi}
\newcommand{\zp}{\ifmmode \mathbb{Z}_p\else$\mathbb{Z}_p$\ \fi}
\newcommand{\zpur}{\ifmmode \widehat{\zp^{ur}}\else $\widehat{\zp^{ur}}$\ \fi}
\newcommand{\Tun}{\ifmmode \mathbb{T}_{un}\else$\mathbb{T}_{un}$\ \fi}

\newcommand{\zpMod}{\ifmmode\mbox{$\zp$-Mod}\else$\zp$-Mod \fi}
\newcommand{\Mod}{\ifmmode\mbox{$\Lambda$-Mod}\else$\Lambda$-Mod \fi}
\renewcommand{\mod}{\ifmmode\mbox{$\Lambda$-mod}\else$\Lambda$-mod
\fi}
\newcommand{\La}{\ifmmode\Lambda\else$\Lambda$\fi}

\newcommand{\M}{\ifmmode {\frak M}\else${\frak M}$ \fi}
\newcommand{\m}{\ifmmode {\frak m}\else$\frak m$ \fi}
\newcommand{\mh}{\ifmmode {\frak m}(H)\else${\frak m}(H)$ \fi}
\newcommand{\p}{\ifmmode {\frak p}\else${\frak p}$\ \fi}
\renewcommand{\P}{\ifmmode {\frak P}\else${\frak P}$\ \fi}
\newcommand{\e}{\ifmmode {\mathcal{E}}\else$\mathcal{E}$ \fi}

\newcommand{\T}{\mathbb{ T}}
\newcommand{\V}{\mathbb{ V}}

\newcommand{\G}{\ifmmode {\mathcal{G}}\else${\mathcal{G}}$\ \fi}
\newcommand{\A}{\ifmmode {\mathcal{A}}\else${\mathcal{ A}}$\ \fi}
 %"local A"=inverse linit of multiplicative local group
 %"local U"=inverse linit of  local unit group
  %global local A
  %global local A
  %global local U
 %global S-units
 %global units

%\renewcommand{\in}{\ \epsilon\ }
    %Aufz"hlungen mit i

%---------------------------------------------------
%von Yoshi
\newcommand{\Qp}{\ifmmode {{\Bbb Q}_p}\else${\Bbb Q}_p$\ \fi}
\newcommand{\qp}{\ifmmode {\Bbb Q}_p\else${\Bbb Q}_p$\ \fi}
\newcommand{\ql}{\ifmmode {{\Bbb Q}_l}\else${\Bbb Q}_l$\ \fi}
\newcommand{\Q}{\ifmmode {\Bbb Q}\else${\Bbb Q}$\ \fi}
\newcommand{\q}{\ifmmode {\Bbb Q}\else${\Bbb Q}$\ \fi}

\newcommand{\Ind}{\mathrm{Ind}}

\begin{document}

\title{Equivariant epsilon conjecture for $1$-dimensional Lubin-Tate groups}

\author{Dmitriy Izychev}
\author{Otmar Venjakob}

\address{Mathematisches Institut, Universit\"{a}t Heidelberg, Im Neuenheimer Feld 288, 69120 Heidelberg, Germany.}
\email{dizychev@mathi.uni-heidelberg.de}

\email{venjakob@mathi.uni-heidelberg.de}
\urladdr{http://www.mathi.uni-heidelberg.de/\textasciitilde otmar/}

\thanks{Both authors acknowledge support by  the ERC-Starting Grant IWASAWA, the first one in addition by the DAAD, the second one by the DFG-Forschergruppe "Symmetrie, Geometrie und Arithmetik", \today}

\begin{abstract}
In this paper we formulate a conjecture on the relationship between the equivariant $\epsilon$-constants (associated to a local $p$-adic representation $V$ and a finite extension of local fields $L/K$) and local Galois cohomology groups of a Galois stable $\mathbb{Z}_{p}$-lattice $T$ of $V$. We prove the conjecture for $L/K$ being an unramified extension of degree prime to $p$ and $T$ being a $p$-adic Tate module of a one-dimensional Lubin-Tate group defined over $\mathbb{Z}_{p}$ by extending the ideas of \cite{Breu} from the case of the multiplicative group $\mathbb{G}_{m}$ to arbitrary one-dimensional Lubin-Tate groups. For the connection to the different formulations of the $\epsilon$-conjecture in \cite{BB}, \cite{FK}, \cite{Breu}, \cite{BlB} and \cite{BF} see \cite{Iz}. \end{abstract}

\maketitle

\section{Introduction}

Let $\chi:(\mathbb{Z}/N\mathbb{Z})^{\times}\rightarrow \mathbb{C}^{\times}$ be a a group homomorphism called a (primitive) Dirichlet character modulo $N\in\mathbb{N}$ (if $N$ minimal). We extend $\chi$ to $\mathbb{N}$ by setting $\chi(n):=\chi(n \mod N)$ for $(n,N)=1$ and $0$ else. The Dirichlet $L$-function of $\chi$ is defined by
\begin{equation*}
L(\chi,s)=\sum^{\infty}_{n:=1}\frac{\chi(n)}{n^{s}}, \quad s\in \mathbb{C},\; Re(s)>1.
\end{equation*}
The completed $L$-function
\begin{equation*}
\Lambda(\chi,s):=L_{\infty}(\chi,s)L(\chi,s),\quad L_{\infty}(\chi,s)=\Big(\frac{N}{\pi}\Big)^{s/2}\Gamma(\chi,s)
\end{equation*}
admits a meromorphic continuation to $\mathbb{C}$ and satisfies the functional equation
\begin{equation*}
\Lambda(\chi,s)=\frac{\tau(\chi)}{i^{k}\sqrt{N}}\Lambda(\overline{\chi},1-s),\;k\in\left\{0,1\right\},\quad \tau(\chi)=\sum^{N-1}_{\nu:=0}\chi(j)e^{2\pi i\nu/N}.
\end{equation*}
For the trivial character $\chi$ we get the Riemann $\zeta$-function, whose Euler product determines the prime factor decomposition of natural numbers. Using the isomorphism $G(\mathbb{Q}(\mu_{N})/\mathbb{Q})\cong (\mathbb{Z}/N\mathbb{Z})^{\times}$ we can view $\chi$ as a character of $G(\mathbb{Q}(\mu_{N})/\mathbb{Q})$. For $N=4$ we then get the analytic class number formula
\begin{equation*}
h_{\mathbb{Q}(i)}=\frac{\sharp\mu(\mathbb{Q}(i))\sqrt{N}}{2\pi}L(\chi_{1},1), \quad \chi_{1}(\bar{1})=1,\;\chi_{1}(\bar{3})=-1.
\end{equation*}

To arbitrary representations coming from motives $M$ the formulas of this kind have been extended by Bloch and Kato and is called the Tamagawa Number Conjecture (TNC). Again we have the complex $L$-function $L(M, s)$ attached to a motive $M$ which is believed to satisfy the following functional equation relating $L(M, s)$ to the $L$-function $L(M^{*}(1), s)$ of the (Kummer) dual motive $M^{*}(1)$ of $M$:
\begin{equation*}
L(M, s) = \epsilon(M, s)\frac{L_{\infty}(M^{*}(1),-s)}{L_{\infty}(M, s)} L(M^{*}(1),-s).
\end{equation*}

Here $L_{\infty}$ is so the called Euler-factor at infinity (attached to $M$ and $M^{*}(1)$, respectively), which is built up by certain $\Gamma$-factors and certain powers of $2$ and $\pi$ (depending on the Hodge structure of $M$), while the so called $\epsilon$-factor decomposes into local factors
\begin{equation*}
\epsilon(M, s) =\prod_{v}\epsilon_{v}(M, s).
\end{equation*}

Then TNC states a relation between the leading term $L^{*}(0)$ (of the Taylor expansion of $L(M, s)$ at zero s = 0) and certain global Galois cohomology groups $R\Gamma(G_{K},M_{p})$ of $M$ (up to the period and regulator). In the following we assume the validity of the functional equation.
Then it is by no means evident that the validity of the TNC for $M$ is equivalent to the validity of the TNC for $M^{*}(1)$ under this functional
equation on the complex analytic side and under Artin/Verdier or Poitou/Tate-duality on the $p$-adic Galois cohomology side. To the contrary, they are only compatible if and only if $\epsilon_{v}(M, 0)$ are in a specific way related to certain local Galois cohomology groups $R\Gamma(G_{K_{v}},M_{p})$ for all places $v$. This is the content of the absolute $\epsilon$-conjecture.

The equivariant $\epsilon$-conjecture is formulated in the similar way by tensoring the coefficients ($\mathbb{Z}_{p}$-modules) with a group algebra $\mathbb{Z}_{p}[G]${, where $G$ is the Galois group }of some local Galois extension $L_{v}/K_{v}$, i.e., by deforming the presentation along an extension of local fields. This version not only states the relation mentioned above for all twists of $M_{p}$ by Artin representations of the Galois group $G$, simultaneously, but it states a finer congruence measuring the difference between the integral group algebra $\mathbb{Z}_{p}[G]$ and a maximal order in $\mathbb{Q}_{p}[G]$. As before it now describes the compatibility of the Equivariant Tamagawa Number Conjecture (ETNC) \cite{BF} for a motive and its Kummer dual with respect to the functional equation and duality in Galois cohomology. S. Yasuda proved in \cite{Y} the cases in which the residue characteristics of $v$ and $p$ differ, thus we consider only $p$-representations of the absolute Galois groups of local fields.

Recently, Benois and Berger \cite{BB} have proved the equivariant conjecture  with respect to the extension $L/K$ for
arbitrary crystalline representations $V$ of $G_K$, where $K$ is an unramified extension of $\mathbb{Q}_p$
and $L$ a finite subextension of $ K(\mu(p))$ over $K.$  In the special case $V=\mathbb{Q}_p(r),$ $r\in\mathbb{Z}$,
Burns and Flach \cite{BF4} prove a local ETNC using global ingredients in a semi-local setting. Extending work \cite{BlB} of Bley and Burns  Breuning \cite{Breu} proves the equivariant conjecture for $V=\mathbb{Q}_p(1)$ with respect to all tamely ramified extensions.

In this paper we prove the equivariant $\epsilon$-conjecture for tamely ramified extensions   and $p$-adic Tate modules of one-dimensional Lubin-Tate groups defined over $\mathbb{Z}_{p}$ following closely their techniques. For (possibly wildly ramified, $p$-adic Lie) subextensions    of the the maximal abelian extension of $\mathbb{Q}_p$ the corresponding result  has been proved in \cite{ven}.

\begin{flushleft}
{\sc Notation and conventions}
\end{flushleft}
Let $p$ be a prime number. Let $K$ be a finite extension of $\mathbb{Q}_{p}$, $\chi^{ur}:G_{K}\rightarrow \mathbb{Z}_{p}^{\times}$ be a continuous unramified character and $\chi^{cyc}:G_{K}\rightarrow \mathbb{Z}_{p}^{\times}$ denote the $p$-adic cyclotomic character. We consider a continuous representation $T=\mathbb{Z}_{p}(\chi^{ur})(\chi^{cyc})=:\mathbb{Z}_{p}(\chi^{ur})(1)$ of $G_{K}$, which appears naturally as a restriction to $G_{K}$ of the $p$-adic Tate module $\mathrm{T}_{p}\mathcal{F}$ of a one-dimensional Lubin-Tate group $\mathcal{F}$ defined over $\mathbb{Z}_{p}$ (see \cite[Exm.\ 5.20]{Iz}). We set  $V:=\mathbb{Q}_{p}\otimes_{\mathbb{Z}_{p}} T$. Let $L$ be a finite Galois extension of $K$ and let $G=Gal(L/K)$ {  with inertia group $I$; by $Fr_K$ we denote the arithmetic Frobenius homomorphism of $K$}. Let $\widehat{\mathbb{Q}^{ur}_{p}}$ denote the $p$-adic completion of the maximal unramified extension of $\mathbb{Q}_{p}$ and $\widehat{\mathbb{Z}^{ur}_{p}}$ its ring of integers. We set $\Lambda:=\mathbb{Z}_{p}[G]$ and $\Omega:=\mathbb{Q}_{p}[G]$, $\tilde\Lambda:=\widehat{\mathbb{Z}^{ur}_{p}}[G]$ and $\tilde\Omega:=\widehat{\mathbb{Q}^{ur}_{p}}[G]$.

We use the theory of Fontaine's period rings $B_{dR}$, $B_{cris}$, the corresponding functors $\mathrm{D}_{dR}$, $\mathrm{D}_{cris}$ and the subgroups $\mathrm{H}^{1}_{f}$, $\mathrm{H}^{1}_{g}$ etc. The details can be found in \cite{FO}.

We omit the case, in which $\chi^{ur}$ factors over $L$, as in this case the $\epsilon$-conjecture has been proved in \cite{Breu} and assume from now on that $\chi^{ur}(G_{L})\neq 1$.

\section{Galois cohomology}

First we compute the continuous Galois cohomology groups $\mathrm{H}^{i}(L,T)$ as $\Lambda$-modules. We point out, that by this we also determine the Galois cohomology groups of the Kummer dual representation $T^{*}(1)$ in view of the local duality theorems in the Galois cohomology theory.

\begin{proposition} \label{fg:pr:1}
With the notation as above we have:
\begin{enumerate}
\item $\mathrm{H}^{i}(L,T)=0$ for $i\neq 1{,2}$.
\item $\mathrm{H}^{1}(L,T) \cong \Big(\widehat{(L^{ur})^{\times}}^{p}(\chi^{ur})\Big)^{G(L^{ur}/L)}$, where $\widehat{-}^{p}$ denotes the $p$-completion of a group.
\item  There is an isomorphism of $\Lambda$-modules
    \begin{equation*}
\mathcal{F}(\mathfrak{p}_{L}) \overset{\cong}{\longrightarrow} \mathrm{H}^{1}(L,T) .
\end{equation*}
\item {\color{black} $\mathrm{H}^{2}(L,T) \cong \mathrm{H}^0(L,V^*/T^*(1))^\vee\cong \mathbb{Z}_p/p^\omega(\chi^{ur})$ is finite, where $\omega=v_p(1-\chi^{ur}(Fr_L))\neq \infty$. There is an exact sequence of $\Lambda$-modules
    \[\xymatrix{
      0 \ar[r] & {\mathbb{Z}_p[G/I]} \ar[rr]^{1-\chi^{ur}(Fr_K)^{-1}Fr_K} && {\mathbb{Z}_p[G/I]} \ar[rr]^{ } && {\mathrm{H}^{2}(L,T)} \ar[r] & 0 }.\] In particular, $\mathrm{H}^{i}(L,T)$ lies in the category of perfect complexes of $\Lambda$-modules for all $i$.}
\end{enumerate}
\end{proposition}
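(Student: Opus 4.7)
My plan is to handle the four assertions in turn, leaning throughout on the Hochschild-Serre spectral sequence for $L \subset L^{ur}$ together with the Lubin-Tate structure of $T = \mathrm{T}_p\mathcal{F}$.

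For (i), the cohomological dimension $\mathrm{cd}_p(G_L) = 2$ kills everything in degrees $\geq 3$, while $\mathrm{H}^0(L,T) = 0$ because the inertia $I_L$ acts on $T = \mathbb{Z}_p(\chi^{ur})(1)$ through $\chi^{cyc}$ alone and $\chi^{cyc}(I_L)$ is an open subgroup of $\mathbb{Z}_p^\times$. For (ii), I apply Hochschild-Serre with $G(L^{ur}/L) \cong \widehat{\mathbb{Z}}$ of cohomological dimension one: since $\chi^{ur}$ becomes trivial on $G_{L^{ur}}$ we have $T|_{G_{L^{ur}}} \cong \mathbb{Z}_p(1)$, so Kummer theory yields $\mathrm{H}^1(L^{ur}, \mathbb{Z}_p(1)) \cong \widehat{(L^{ur})^\times}^p$, while $\mathrm{H}^0$ and $\mathrm{H}^2$ vanish ($\mu_{p^\infty}(L^{ur})$ is finite and $\mathrm{Br}(L^{ur}) = 0$). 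The residual $G(L^{ur}/L)$-action on $\mathrm{H}^1(L^{ur},T)$ acquires the $\chi^{ur}$-twist, and taking invariants gives (ii).

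For (iii) I combine (ii) with the Lubin-Tate structure of $\mathcal{F}$. The standard Kummer map $\mathcal{F}(\mathfrak{p}_L) \to \mathrm{H}^1(L, T)$ is built from the exact sequence $0 \to \mathcal{F}[p^n] \to \mathcal{F}(\mathfrak{p}_{\bar{L}}) \xrightarrow{[p^n]} \mathcal{F}(\mathfrak{p}_{\bar{L}}) \to 0$ by passing to $G_L$-cohomology and taking the inverse limit in $n$. To check bijectivity I invoke the isomorphism $\mathcal{F} \cong \widehat{\mathbb{G}}_m$ over $\widehat{\mathbb{Z}_p^{ur}}$ valid for any height-one Lubin-Tate group over $\mathbb{Z}_p$, together with the fact that this comparison intertwines Frobenius with the $\chi^{ur}$-twist. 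This yields $\mathcal{F}(\mathfrak{p}_{\widehat{L^{ur}}}) \cong (1 + \mathfrak{p}_{\widehat{L^{ur}}})(\chi^{ur})$ as $G(L^{ur}/L)$-modules; feeding this into the valuation short exact sequence for $\widehat{(L^{ur})^\times}^p$ and using $(\mathbb{Z}_p(\chi^{ur}))^{G(L^{ur}/L)} = 0$ (which is where the standing hypothesis $\chi^{ur}(G_L) \neq 1$ enters, killing the valuation piece), the right-hand side of (ii) collapses to $\mathcal{F}(\mathfrak{p}_L)$.

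For (iv), local Tate duality gives $\mathrm{H}^2(L,T) \cong \mathrm{H}^0(L, V^*/T^*(1))^\vee$, and computing the $G_L$-fixed points of the $(\chi^{ur})^{-1}$-action on $\mathbb{Q}_p/\mathbb{Z}_p$ identifies this module with $\mathbb{Z}_p/p^\omega$ carrying the residual $G$-action through $\chi^{ur}$. For the $\Lambda$-module exact sequence I work with $\mathbb{Z}_p[G/I]$ directly: right multiplication by $1 - \chi^{ur}(Fr_K)^{-1}Fr_K$ on $\mathbb{Z}_p[G/I]$ has $\mathbb{Z}_p$-linear determinant $1 - \chi^{ur}(Fr_L)^{-1}$, a non-zerodivisor in $\mathbb{Z}_p$, so the operator is injective; and a short direct computation shows its cokernel is $\mathbb{Z}_p/p^\omega(\chi^{ur})$ as a $\Lambda$-module, matching the description of $\mathrm{H}^2(L,T)$. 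Perfectness of $R\Gamma(L,T)$ then follows from this two-term free $\Lambda$-resolution of $\mathrm{H}^2$ together with the finite generation of $\mathrm{H}^1 \cong \mathcal{F}(\mathfrak{p}_L)$ over $\mathbb{Z}_p$. The main obstacle is the Lubin-Tate comparison in (iii), which requires carefully tracking the $\chi^{ur}$-twist arising from the isomorphism $\mathcal{F} \cong \widehat{\mathbb{G}}_m$ over $\widehat{\mathbb{Z}_p^{ur}}$; the same ingredient underwrites a twisted Hilbert 90 showing $\mathrm{H}^1(G(L^{ur}/L), \widehat{\mathcal{O}_{L^{ur}}^\times}^p(\chi^{ur})) = 0$, which is the alternative cohomological route to the value of $\mathrm{H}^2$.
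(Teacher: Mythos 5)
Your proposal follows essentially the same route as the paper on all four points: (i) cohomological dimension plus non-triviality of $\chi^{ur}\chi^{cyc}$ on $G_L$; (ii) Hochschild--Serre for $L^{ur}/L$ and Kummer theory over $L^{ur}$; (iii) the Kummer sequence for $\mathcal{F}$ plus the Lubin--Tate comparison $\mathcal{F}\cong\widehat{\mathbb{G}}_m$ over $\widehat{\mathbb{Z}_p^{ur}}$ and the vanishing $\mathbb{Z}_p(\chi^{ur})^{G(L^{ur}/L)}=0$; (iv) local Tate duality and the explicit two-term presentation over $\mathbb{Z}_p[G/I]$. In (iii) the paper reaches the same conclusion by decomposing $\widehat{(L^{ur})^\times}^p\cong\widehat{(\pi_L)}^p\times U^1(\widehat{L^{ur}})$ and quoting \cite[Exm.\ 5.20]{Iz} for the identification of the quotient $\mathrm{H}^1(L,T)/\mathcal{F}(\mathfrak{p}_L)$; your direct invocation of the $\widehat{\mathbb{Z}_p^{ur}}$-isomorphism with $\widehat{\mathbb{G}}_m$ is the same ingredient phrased differently. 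In (iv) the paper computes $\mathbb{Z}_p$-elementary divisors $(1,\ldots,1,1-\chi^{ur}(Fr_L))$ where you compute a determinant; these are equivalent. Your aside about using a twisted Hilbert 90 plus the HSS $E_2^{1,1}$-term as an alternative route to $\mathrm{H}^2$ is a correct observation but not used in the paper.

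There is, however, a genuine gap in your final perfectness claim. You write that ``perfectness of $R\Gamma(L,T)$ follows from the two-term free $\Lambda$-resolution of $\mathrm{H}^2$ together with the finite generation of $\mathrm{H}^1\cong\mathcal{F}(\mathfrak{p}_L)$ over $\mathbb{Z}_p$.'' This does not work: finite generation over $\mathbb{Z}_p$ does not imply that a $\Lambda=\mathbb{Z}_p[G]$-module has finite projective dimension over $\Lambda$ (e.g.\ $\mathbb{Z}_p$ with trivial $G$-action, for $G$ a nontrivial $p$-group, is $\mathbb{Z}_p$-finite but has infinite $\Lambda$-projective dimension). Moreover, the assertion to be proved is that each individual $\mathrm{H}^i(L,T)$ is a perfect $\Lambda$-module, not (only) that $R\Gamma(L,T)$ is. The paper's argument goes in the opposite logical direction: $R\Gamma(L,T)$ is perfect by the standard finiteness theorem for continuous Galois cohomology; $\mathrm{H}^0=0$ is perfect and $\mathrm{H}^2$ is perfect by the explicit two-term free resolution; and then $\mathrm{H}^1$ is perfect by ``two out of three'' for the truncation triangles of a perfect complex. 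Alternatively, one can cite the cohomological triviality of $\mathcal{F}(\mathfrak{p}_L)$ as a $G$-module (\cite[Prop.\ 3.9]{CG}) to get perfectness of $\mathrm{H}^1$ directly, as the paper does later in the tamely ramified section -- but either way, ``$\mathbb{Z}_p$-finite'' is not the right hypothesis.
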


\begin{proof}
$\mathrm{H}^{i}(L,T)=0$ for $i\neq 0,1,2$ because the cohomological dimension of $G_{K}$ is $2$. Further, $\mathrm{H}^{0}(L,T)=T^{G_{L}}=0$, as the character $\chi^{ur}\otimes \chi^{cyc}:G_{L}\rightarrow \mathbb{Z}_{p}^{\times}$ is not trivial. Using the local duality theorem \cite[Thm.\ (7.2.6)]{NSW} we get {\color{black}$\mathrm{H}^{2}(L,T)\cong\mathrm{H}^0(L,V^*(1)/T^*(1))^\vee$ . Since $V^*(1)/T^*(1)=\mathbb{Q}_{p}/\mathbb{Z}_{p}\big((\chi^{ur})^{-1}\big)$ is unramified, we obtain an
 exact sequence
\small{
\begin{equation*}
\xymatrix{
0 \ar[r] & \mathrm{H}^{0}(L,V^{*}(1)/T^{*}(1)) \ar[r] & \mathbb{Q}_{p}/\mathbb{Z}_{p}\big((\chi^{ur})^{-1}\big) \ar[r]^{\;\;\;1-Fr_{L}} &  \mathbb{Q}_{p}/\mathbb{Z}_{p}\big((\chi^{ur})^{-1}\big) \ar[r] & 0
}\end{equation*}}\normalsize
whose Pontryagin dual becomes
\begin{equation*}
\xymatrix{
0 \ar[r] & \mathbb{Z}_{p}(\chi^{ur})  \ar[r]^{1-Fr_{L}} & \mathbb{Z}_{p}(\chi^{ur}) \ar[r] &  \mathrm{H}^{0}(L,V^{*}(1)/T^{*}(1))^{\vee} \ar[r] & 0,
}\end{equation*}
whence $\mathrm{H}^{2}(L,T)$ is a finite cyclic  group as by assumption $(\chi^{ur})^{-1}(G_{L})\neq 1$.
On the other hand it is easy to calculate the $\mathbb{Z}_p$-elementary divisors of the matrix representing the operator $ 1-\chi^{ur}(Fr_K)^{-1}Fr_K$ on $\mathbb{Z}_p[G/I]$:
\[  1, \ldots , 1, 1-\chi^{ur}(Fr_{L}),\] whence the exactness in (4) is clear. Since $R\Gamma(G_{L},T)$ is a perfect complex, the remaining assertions follow immediately by standard homological algebra.
}

To compute the group $\mathrm{H}^{1}(L,T)$ we use the Hochschild-Serre spectral sequence for the closed subgroup $G_{L^{ur}}$ of $G_{L}$, which exists first only for finite discrete modules $T/p^{n}$, but with \cite[Thm.\ 2.7.5]{NSW} also for the compact module $T$. Note that the character $\chi^{ur}$ factors over $L^{ur}$, such that $G_{L^{ur}}$ acts via the cyclotomic character on $T$. The five-term exact sequence takes the form
\begin{equation*}
\begin{array}{l}
0 \;\rightarrow\; \mathrm{H}^{1}(Gal(L^{ur}/L),T^{G_{L^{ur}}}) \;\rightarrow\; \mathrm{H}^{1}(L,T) \;\rightarrow \\
\\
\mathrm{H}^{1}(G_{L^{ur}},T)^{G(L^{ur}/L)} \;\rightarrow\; \mathrm{H}^{2}(Gal(L^{ur}/L),T^{G_{L^{ur}}}) \;\rightarrow\; \mathrm{H}^{2}(L,T).
\end{array}
\end{equation*}
The module of invariants $T^{G_{L^{ur}}}$ is a zero-module, since the cyclotomic character is not trivial, thus the first and the fourth terms in the exact sequence above vanish and we get a canonical isomorphism
\begin{equation*}
\mathrm{H}^{1}(L,T) \cong \mathrm{H}^{1}(G_{L^{ur}},T)^{G(L^{ur}/L)}=\mathrm{H}^{1}(G_{L^{ur}},\mathbb{Z}_{p}(\chi^{ur})(1))^{G(L^{ur}/L)}.
\end{equation*}

From the Kummer theory and the isomorphism
\begin{equation*}
\mathrm{H}^{1}(G_{L^{ur}},\mathbb{Z}_{p}(\chi^{ur})(1))^{G(L^{ur}/L)}=\Big(\mathrm{H}^{1}(G_{L^{ur}},\mathbb{Z}_{p}(1))(\chi^{ur})\Big)^{G(L^{ur}/L)}
\end{equation*}
we obtain $H^{1}(L,T) \cong \Big(\widehat{(L^{ur})^{\times}}^{p}(\chi^{ur})\Big)^{G(L^{ur}/L)}$.

By taking $G_{L}$-invariants of the exact sequence
\begin{equation} \label{fg:eq:3}
\xymatrix{
0 \ar[r] & \mathcal{F}[p^{n}](\bar{\mathfrak{p}}) \ar[r] & \mathcal{F}(\bar{\mathfrak{p}}) \ar[r]^{[p^{n}]}  & \mathcal{F}(\bar{\mathfrak{p}}) \ar[r] & 0 \\
} \end{equation}
we get the following exact sequence of $\Lambda$-modules
\begin{equation*}
\xymatrix{
0 \ar[r] & ^{\mathcal{F}(\mathfrak{p}_{L})}/_{[p^{n}](\mathcal{F}(\mathfrak{p}_{L}))}  \ar[r]  & \mathrm{H}^{1}(L, \mathcal{F}[p^{n}](\bar{\mathfrak{p}})) \ar[r] &  \mathrm{H}^{1}(L, \mathcal{F}(\bar{\mathfrak{p}}))[p^{n}] \ar[r] & 0
} \end{equation*}
for each $n\geq 1$.

The inverse limit over $n$ of the exact sequences above results in the exact sequence of $\Lambda$-modules
\begin{equation*}
\xymatrix{
0 \ar[r] & \mathcal{F}(\mathfrak{p}_{L}) \ar[r]  & \mathrm{H}^{1}(L, T) \ar[r] &  \mathrm{H}^{1}(L, T)/\mathcal{F}(\mathfrak{p}_{L}) \ar[r] & 0
} \end{equation*}
$\mathcal{F}(\mathfrak{p}_{L})$ being a f.g.\ $\mathbb{Z}_{p}$-module (cf.\ \cite[4.5.1]{Breu}). From \cite[Exm.\ 5.20]{Iz} we deduce that the quotient $\mathrm{H}^{1}(L, T)/\mathcal{F}(\mathfrak{p}_{L})$ is isomorphic to
\begin{equation} \label{awful quotient}
 \Big(\frac{\widehat{(L^{ur})^{\times}}^{p}}{U^{1}(\widehat{L^{ur}})}(\chi^{ur})\Big)^{G(L^{ur}/L)}.
\end{equation}

For the extension $L^{ur}/\mathbb{Q}_{p}$ we have (both algebraically and topologically)
\begin{equation*}
(L^{ur})^{\times}=(\pi_{L})\times \mathcal{O}^{\times}_{L^{ur}}\cong \mathbb{Z}\oplus \mathcal{O}^{\times}_{L^{ur}},
\end{equation*}
where $\pi_{L}$ is a prime element of $\mathcal{O}_{L}$. Let $\kappa_{L}$ denote the residue class field of $L$, then we have a split exact sequence
\begin{equation*}
\xymatrix{
1 \ar[r] & U^{1}(L^{ur}) \ar[r] & \mathcal{O}^{\times}_{L^{ur}} \ar[r] & \overline{\kappa_{L}}^{\times} \ar[r] & 1.
}\end{equation*}
The group $\overline{\kappa_{L}}^{\times}$ is $p$-divisible, thus
\begin{equation*}
\widehat{\overline{\kappa_{L}}^{\times}}^{p}:=\underset{n}{\varprojlim}\;\overline{\kappa_{L}}^{\times}/(\overline{\kappa_{L}}^{\times})^{p^{n}}=1
\end{equation*}
and
\begin{equation*}
 \widehat{(L^{ur})^{\times}}^{p}=\widehat{(\pi_{L})}^{p}\times \widehat{U^{1}(L^{ur})}^{p}=\widehat{(\pi_{L})}^{p}\times U^{1}(\widehat{L^{ur}}),
\end{equation*}
whence the quotient in \eqref{awful quotient} is isomorphic to
\begin{equation*}
\Big(\widehat{(\pi_{L})}^{p}(\chi^{ur})\Big)^{G(L^{ur}/L)}.
\end{equation*}
The last is zero, since the group $G(L^{ur}/L)$ acts trivially on $(\pi_{L})$ and $\chi^{ur}$ is a non-trivial character .
\end{proof}

\begin{flushleft}
\quad Next we compute the finite part $\mathrm{H}^{1}_{f}(L,T)\subseteq \mathrm{H}^{1}(L,T)$ defined as a preimage of $\mathrm{H}^{1}_{f}(L,V)$  under the map $i:\mathrm{H}^{1}(L,T)\rightarrow \mathrm{H}^{1}(L,V)$.
\end{flushleft}

\begin{lemma} \label{fg:le:1}
$\dim_{\mathbb{Q}_{p}}\mathrm{H}^{1}_{f}(L,V)=\dim_{\mathbb{Q}_{p}}\mathrm{H}^{1}(L,V)=[L:\mathbb{Q}_{p}]$.
\end{lemma}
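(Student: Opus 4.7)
The plan is to compute $\dim_{\mathbb{Q}_p}\mathrm{H}^{1}(L,V)$ first, and then to obtain the equality $\mathrm{H}^{1}_{f}(L,V)=\mathrm{H}^{1}(L,V)$ by a dimension count, since $\mathrm{H}^{1}_{f}$ is by definition a subspace of $\mathrm{H}^{1}$.

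For the first equality $\dim_{\mathbb{Q}_p}\mathrm{H}^{1}(L,V)=[L:\mathbb{Q}_p]$, I will apply the local Euler--Poincar\'e characteristic formula. Since $\dim_{\mathbb{Q}_p}V=1$, it gives $\chi(G_L,V)=-[L:\mathbb{Q}_p]$. Proposition \ref{fg:pr:1} already supplies the required inputs: $\mathrm{H}^{0}(L,T)=0$ (so $\mathrm{H}^{0}(L,V)=0$), and $\mathrm{H}^{2}(L,T)$ is finite, so $\mathrm{H}^{2}(L,V)=\mathbb{Q}_p\otimes_{\mathbb{Z}_p}\mathrm{H}^{2}(L,T)=0$. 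Hence $\dim_{\mathbb{Q}_p}\mathrm{H}^{1}(L,V)=[L:\mathbb{Q}_p]$.

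For the equality $\mathrm{H}^{1}_{f}(L,V)=\mathrm{H}^{1}(L,V)$, I will invoke the Bloch--Kato dimension formula
\[
\dim_{\mathbb{Q}_p}\mathrm{H}^{1}_{f}(L,V) \;=\; \dim_{\mathbb{Q}_p}\mathrm{H}^{0}(L,V) \,+\, \dim_{\mathbb{Q}_p}\bigl(\mathrm{D}_{dR}(V)/\mathrm{Fil}^{0}\mathrm{D}_{dR}(V)\bigr),
\]
valid for de Rham representations. Now $V=\mathbb{Q}_p(\chi^{ur})(1)$ is a twist of the crystalline representation $\mathbb{Q}_p(1)$ by the unramified character $\chi^{ur}$, hence itself crystalline, and its unique Hodge--Tate weight is $-1$ (contributed by the cyclotomic twist; the unramified piece contributes weight $0$). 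Consequently $\mathrm{Fil}^{0}\mathrm{D}_{dR}(V)=0$, so the tangent space $t_V(L):=\mathrm{D}_{dR}(V)/\mathrm{Fil}^{0}\mathrm{D}_{dR}(V)$ coincides with $\mathrm{D}_{dR}(V)$, a one-dimensional $L$-vector space of $\mathbb{Q}_p$-dimension $[L:\mathbb{Q}_p]$. Substituting $\mathrm{H}^{0}(L,V)=0$ yields $\dim_{\mathbb{Q}_p}\mathrm{H}^{1}_{f}(L,V)=[L:\mathbb{Q}_p]$, matching the dimension of $\mathrm{H}^{1}(L,V)$ and forcing equality.

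I do not anticipate a serious obstacle here; the argument is essentially bookkeeping once the Bloch--Kato machinery is available, the one point meriting care being the convention for the Hodge--Tate weight of $\mathbb{Q}_p(1)$. A more geometric alternative, closer in spirit to the Lubin--Tate setting, would avoid the period ring computation entirely: since $\mathcal{F}$ has good reduction, the Kummer map $\mathcal{F}(\mathfrak{p}_L)\otimes_{\mathbb{Z}_p}\mathbb{Q}_p\to\mathrm{H}^{1}(L,V)$ factors through $\mathrm{H}^{1}_{f}(L,V)$, and Proposition \ref{fg:pr:1}(iii) shows this map is already surjective; the logarithm simultaneously identifies the $\mathbb{Z}_p$-rank of $\mathcal{F}(\mathfrak{p}_L)$ with $[L:\mathbb{Q}_p]$, thereby reproving both equalities at once.
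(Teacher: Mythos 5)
Your argument is correct and follows essentially the same route as the paper: you combine the local Euler--Poincar\'e characteristic formula with the Bloch--Kato dimension formula for $\mathrm{H}^1_f$. The one organisational difference is which of the two Bloch--Kato identities carries the weight. The paper does not compute $\dim_{\mathbb{Q}_p} t_V(L)$ directly; instead it uses the pairing formula $\dim \mathrm{H}^1_f(L,V)+\dim \mathrm{H}^1_f(L,V^*(1))=\dim \mathrm{H}^1(L,V)$ and shows $t_{V^*(1)}(L)=0$ (via the embedding $t_W(L)\hookrightarrow(\mathbb{C}_p(-1)\otimes W)^{G_L}$ and $(\mathbb{C}_p(\chi^{ur})^{-1}(-1))^{G_L}=0$), which already forces $\mathrm{H}^1_f(L,V)=\mathrm{H}^1(L,V)$ before any Euler characteristic count; the degree $[L:\mathbb{Q}_p]$ is then supplied separately. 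You instead compute $\dim t_V(L)=[L:\mathbb{Q}_p]$ directly from $\mathrm{Fil}^0\mathrm{D}_{dR}(V)=0$ (i.e., from the Hodge--Tate weight of the cyclotomic twist) and match it against the Euler characteristic. Both are valid and amount to the same bookkeeping; the paper's variant has the small advantage of not needing an exact identification of the tangent space, only its vanishing for the Kummer dual. Your remark that a geometric alternative via the Lubin--Tate logarithm would recover both equalities at once is a nice observation and is in fact consistent with the diagram~\eqref{log:eq:1} used later in the paper.
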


\begin{proof}
Both, $V$ and $V^{*}(1)$, are de Rham representations of $G_{L}$, thus from \cite[pp. 355-356]{BK} we have
\begin{equation} \label{fg:eq:1}
\dim_{\mathbb{Q}_{p}} \mathrm{H}^{1}_{f}(L,V)+\dim_{\mathbb{Q}_{p}} \mathrm{H}^{1}_{f}(L,V^{*}(1))=\dim_{\mathbb{Q}_{p}} \mathrm{H}^{1}(L,V);
\end{equation}
\begin{equation} \label{fg:eq:2}
\dim_{\mathbb{Q}_{p}} \mathrm{H}^{1}_{f}(L,V)=\dim_{\mathbb{Q}_{p}} (t_{V}(L))+\dim_{\mathbb{Q}_{p}} \mathrm{H}^{0}(L,V),
\end{equation}
where $t_{V}(L):=\mathrm{D}^{L}_{dR}(V)/Fil^{0}\mathrm{D}^{L}_{dR}(V)$. The same is true for $V^{*}(1)$.

But $\mathrm{H}^{0}(L,V)$ and $\mathrm{H}^{0}(L,V^{*}(1))$ are zeros by the proof of Proposition \ref{fg:pr:1}, so that
\begin{equation*}
\dim_{\mathbb{Q}_{p}} \mathrm{H}^{1}_{f}(L,V)=\dim_{\mathbb{Q}_{p}} (t_{V}(L))
\end{equation*}
and
\begin{equation*}
\dim_{\mathbb{Q}_{p}} \mathrm{H}^{1}_{f}(L,V^{*}(1))=\dim_{\mathbb{Q}_{p}} (t_{V^{*}(1)}(L)).
\end{equation*}
For a de Rham representation $W$ by \cite[p. 148]{FO}
\begin{equation*}
t_{W}(L)=gr^{-1}(W)\hookrightarrow (\mathbb{C}_{p}(-1)\otimes_{\mathbb{Q}_{p}}W)^{G_{L}}.
\end{equation*}
Moreover, by Corollary 3.57 in (loc.\ cit.)
\begin{equation*}
(\mathbb{C}_{p}(-1)\otimes_{\mathbb{Q}_{p}}V)^{G_{L}}=(\mathbb{C}_{p}(\chi^{ur}))^{G_{L}}\cong L
\end{equation*}
and
\begin{equation*}
(\mathbb{C}_{p}(-1)\otimes_{\mathbb{Q}_{p}}V^{*}(1))^{G_{L}}=(\mathbb{C}_{p}(\chi^{ur})^{-1}(-1))^{G_{L}}=0,
\end{equation*}
thus from the equality \eqref{fg:eq:1} we get
\begin{equation*}
\dim_{\mathbb{Q}_{p}} \mathrm{H}^{1}_{f}(L,V)=\dim_{\mathbb{Q}_{p}} \mathrm{H}^{1}(L,V).
\end{equation*}
Finally, using the formula for the Euler characteristic
\begin{equation*}
\sum_{i=0}^{\infty}  (-1)^{i} \dim_{\mathbb{Q}_{p}} \mathrm{H}^{i}(L,V)=-[L:\mathbb{Q}_{p}]\cdot \dim_{\mathbb{Q}_{p}} V
\end{equation*}
we see that $\dim_{\mathbb{Q}_{p}} \mathrm{H}^{1}(L,V)=[L:\mathbb{Q}_{p}]$, as $\mathrm{H}^{i}(L,V)=0$ for $i\neq 1$ (cf. Proposition \ref{fg:pr:1}).
\end{proof}

\begin{corollary} \label{cor:120}
From the above proposition it follows that
\begin{enumerate}
\item $\mathrm{H}^{1}_{f}(L,T)=\mathrm{H}^{1}(L,T)$ is a $\mathbb{Z}_{p}$-module of rank $[L:\mathbb{Q}_{p}]$,
\item $\mathrm{H}^{1}_{f}(L,T^{*}(1))=\mathrm{H}^{1}(L,T^{*}(1))_{tors}\cong \mathrm{H}^{0}(L,V^{*}(1)/T^{*}(1))$ is a finite torsion group.
\end{enumerate}
\end{corollary}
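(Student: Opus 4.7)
\bigskip
\noindent\textbf{Proof proposal for Corollary \ref{cor:120}.} The plan is to extract both statements directly from Lemma~\ref{fg:le:1} combined with the definition of the finite part and a short exercise in the long exact sequence attached to $0 \to T^*(1) \to V^*(1) \to V^*(1)/T^*(1) \to 0$.

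\smallskip
\noindent\emph{Part (i).} By definition $\mathrm{H}^{1}_{f}(L,T)$ is the preimage of $\mathrm{H}^{1}_{f}(L,V)$ under $i:\mathrm{H}^{1}(L,T)\to \mathrm{H}^{1}(L,V)$. Lemma~\ref{fg:le:1} already gives $\mathrm{H}^{1}_{f}(L,V)=\mathrm{H}^{1}(L,V)$, so the first equality in (i) is immediate. For the $\mathbb{Z}_p$-rank, note that $\mathrm{H}^{1}(L,T)$ is finitely generated over $\mathbb{Z}_p$ (for instance by Proposition~\ref{fg:pr:1}, since it is the degree one cohomology of a perfect complex of $\Lambda$-modules), and tensoring with $\mathbb{Q}_p$ gives $\mathrm{H}^{1}(L,V)$ whose dimension is $[L:\mathbb{Q}_p]$ by Lemma~\ref{fg:le:1}.

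\smallskip
\noindent\emph{Part (ii), reduction to torsion.} I first show that $\mathrm{H}^{1}_{f}(L,V^{*}(1))=0$. Plugging the equality $\mathrm{H}^{1}_{f}(L,V)=\mathrm{H}^{1}(L,V)$ from Lemma~\ref{fg:le:1} into the Bloch--Kato identity \eqref{fg:eq:1} forces $\dim_{\mathbb{Q}_p}\mathrm{H}^{1}_{f}(L,V^{*}(1))=0$, hence $\mathrm{H}^{1}_{f}(L,V^{*}(1))=0$. Consequently $\mathrm{H}^{1}_{f}(L,T^{*}(1))$ equals the kernel of $\mathrm{H}^{1}(L,T^{*}(1))\to \mathrm{H}^{1}(L,V^{*}(1))$; since $\mathrm{H}^{1}(L,T^{*}(1))$ is a finitely generated $\mathbb{Z}_p$-module (same argument as in (i)) and $\mathrm{H}^{1}(L,V^{*}(1))=\mathbb{Q}_p\otimes_{\mathbb{Z}_p}\mathrm{H}^{1}(L,T^{*}(1))$, this kernel is exactly the torsion submodule $\mathrm{H}^{1}(L,T^{*}(1))_{tors}$.

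\smallskip
\noindent\emph{Part (ii), identification with $\mathrm{H}^{0}(L,V^{*}(1)/T^{*}(1))$.} Applying $G_L$-cohomology to
\begin{equation*}
0 \longrightarrow T^{*}(1) \longrightarrow V^{*}(1) \longrightarrow V^{*}(1)/T^{*}(1) \longrightarrow 0
\end{equation*}
and noting $\mathrm{H}^{0}(L,V^{*}(1))=0$ (since $V^{*}(1)=\mathbb{Q}_p((\chi^{ur})^{-1})$ and $(\chi^{ur})^{-1}$ is nontrivial on $G_L$ by our standing assumption, as was already observed in the proof of Proposition~\ref{fg:pr:1}) yields an injection
\begin{equation*}
\mathrm{H}^{0}(L,V^{*}(1)/T^{*}(1))\hookrightarrow \mathrm{H}^{1}(L,T^{*}(1))
\end{equation*}
whose image is the kernel of the map to $\mathrm{H}^{1}(L,V^{*}(1))$, i.e.\ $\mathrm{H}^{1}(L,T^{*}(1))_{tors}$. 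Finiteness of $\mathrm{H}^{0}(L,V^{*}(1)/T^{*}(1))$ follows from the same unramified-character calculation used in Proposition~\ref{fg:pr:1}(iv). I do not expect a genuine obstacle here; the only point requiring a moment of care is to justify that the kernel of $\mathrm{H}^{1}(L,T^{*}(1))\to \mathrm{H}^{1}(L,V^{*}(1))$ is literally the $\mathbb{Z}_p$-torsion, which is the standard fact about tensoring a finitely generated $\mathbb{Z}_p$-module with $\mathbb{Q}_p$.
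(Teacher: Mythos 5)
Your proposal is correct and follows essentially the same route as the paper: both parts come down to $\mathrm{H}^{1}_{f}(L,V)=\mathrm{H}^{1}(L,V)$ from Lemma~\ref{fg:le:1}, the resulting vanishing of $\mathrm{H}^{1}_{f}(L,V^{*}(1))$, and the long exact sequence attached to $0\to T^{*}(1)\to V^{*}(1)\to V^{*}(1)/T^{*}(1)\to 0$. The only difference is cosmetic — you spell out the "preimage of $0$ equals torsion" step in full, whereas the paper compresses it to one sentence — so there is nothing to flag.
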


\begin{proof}
The first part is obvious. By the definition of $\mathrm{H}^{1}_{f}(L,T^{*}(1))$ it contains the torsion subgroup of $\mathrm{H}^{1}(L,T^{*}(1))$ and, since the image of $\mathrm{H}^{1}_{f}(L,T^{*}(1))$ in $\mathrm{H}^{1}(L,V^{*}(1))$ is zero, they are equal. Consider an exact sequence of $G_{L}$-modules
\begin{equation*}
\xymatrix{
0 \ar[r] & T^{*}(1) \ar[r] & V^{*}(1) \ar[r] & V^{*}(1)/T^{*}(1) \ar[r] & 0.
}\end{equation*}
The associated long exact sequence in cohomology is
\begin{equation*}
\xymatrix{
0 \ar[r] & \mathrm{H}^{0}(L,T^{*}(1)) \ar[r] & \mathrm{H}^{0}(L,V^{*}(1)) \ar[r] & \mathrm{H}^{0}(L,V^{*}(1)/T^{*}(1)) \ar[r] & \\
 \ar[r] & \mathrm{H}^{1}(L,T^{*}(1)) \ar[r] & \mathrm{H}^{1}(L,V^{*}(1)) \ar[r] & \mathrm{H}^{1}(L,V^{*}(1)/T^{*}(1)) \ar[r] & \ldots\\
}\end{equation*}

The groups $\mathrm{H}^{0}(L,T^{*}(1))$ and $\mathrm{H}^{0}(L,V^{*}(1))$ are zeros. Further, the third group $\mathrm{H}^{0}(L,V^{*}(1)/T^{*}(1))$ is a finite torsion group, since $(\chi^{ur})^{-1}\equiv \mathrm{id}\;(\mathrm{mod}\; p^{k})$ for some $k>>1$, so that we can replace $\mathrm{H}^{1}(L,T^{*}(1))$ by $\mathrm{H}^{1}_{f}(L,T^{*}(1))$ in the exact sequence above getting
\begin{equation*}
\xymatrix{
0 \ar[r] & \mathrm{H}^{0}(L,V^{*}(1)/T^{*}(1)) \ar[r]^{\cong} & \mathrm{H}^{1}_{f}(L,T^{*}(1)) \ar[r] & 0.
}\end{equation*}
\end{proof}

\section{Comparison isomorphism}
Let $T$ be a finitely generated (free) $\zp$-module with a continuous $G_K$-action. We assume that $V:=\qp\otimes_{\zp} T$ is a de Rham representation. Furthermore we define $\zp[G]$-modules \[\T:=\zp[G]^\sharp \otimes_{\zp} T\] and \[\V:=\zp[G]^\sharp \otimes_{\zp} V\] where the action is given by left-multiplication on the left tensor-factor. The sharp indicates that these modules are endowed with the following $G_K$-actions: $\sigma(\lambda\otimes t):= \lambda \bar{\sigma}^{-1}\otimes \sigma t,$ where $\bar{\sigma}$ denotes the image of $\sigma$ under the natural projection map. Henceforth  we use the following explicit realisation for the induction
\[ \mathrm{Ind}_{L/\mathbb{Q}_{p}}T:=\zp[G_{\qp}]\otimes_{\zp[G_K]} \T \left(\cong \zp[G_{\qp}]\otimes_{\zp[G_L]} T\right)\]
and similarly for $\mathrm{Ind}_{L/\mathbb{Q}_{p}}V$. These are $G_{\qp}$-modules by the action on the left tensor-factor while they become $\Lambda$- and $\Omega$-modules via the corresponding module structures of $\T$ and $\V$, respectively.

The $p$-adic comparison isomorphism for   $V$
\begin{equation*}
\begin{array}{rcl}
comp_{V,L/\mathbb{Q}_{p}}:B_{dR}\otimes_{\mathbb{Q}_{p}}\mathrm{D}_{dR}(\mathrm{Ind}_{L/\mathbb{Q}_{p}}V)& \overset{\cong}{\rightarrow} & B_{dR}\otimes_{\mathbb{Q}_{p}} \mathrm{Ind}_{L/\mathbb{Q}_{p}}V,\\
\\
c\otimes x & \mapsto & c x
\end{array}
\end{equation*}
is a $B_{dR}[G]$-linear map, which commutes with the action of $G_{\mathbb{Q}_{p}}$, where $G_{\mathbb{Q}_{p}}$ acts on $B_{dR}\otimes_{\mathbb{Q}_{p}}\mathrm{D}_{dR}(\mathrm{Ind}_{L/\mathbb{Q}_{p}}V)$ via $g(c\otimes x)=g(c)\otimes x$ and diagonally on $B_{dR}\otimes_{\mathbb{Q}_{p}} \mathrm{Ind}_{L/\mathbb{Q}_{p}}V$.

Let $\xi=(\xi_n)$ be a compatible system of $p^{n}$th roots of unity and $t:=log[\xi]\in B_{dR}$ denote the $p$-adic period analogous to $2\pi i$, then $g(t)=\chi^{cyc}(g)\cdot t$ for all $g\in G_{\mathbb{Q}_{p}}$. We apply (not necessary commutative) determinant functor to $comp_{V,L/\mathbb{Q}_{p}}$ to obtain a map
\begin{equation*}
\tilde{\alpha}_{V,L/K}=(x,y)\in \mathrm{Isom}\left(\mathrm{\mathbf{d}}_{\Omega}(\mathrm{D}_{dR}(\mathrm{Ind}_{L/\mathbb{Q}_{p}}V)),
\mathrm{\mathbf{d}}_{\Omega}(\mathrm{Ind}_{L/\mathbb{Q}_{p}}V)\right)\times^{K_{1}(\Omega)}K_{1}(B_{dR}[G]).
\end{equation*}
Multiplying $\tilde{\alpha}_{V,L/K}$ with $t $ we get
\small{
\begin{equation*}
\alpha_{V,L/K}=(x,ty) \in \mathrm{Isom}\left(\mathrm{\mathbf{d}}_{\Omega}(\mathrm{D}_{dR}(\mathrm{Ind}_{L/\mathbb{Q}_{p}}V)),
\mathrm{\mathbf{d}}_{\Omega}(\mathrm{Ind}_{L/\mathbb{Q}_{p}}V)\right)\times^{K_{1}(\Omega)}K_{1}(\widehat{L^{ur}}[G])
\end{equation*}} \normalsize
with
\begin{equation*}
g(y)=[\mathrm{Ind}_{L/\mathbb{Q}_{p}} V,g]\cdot y,\;\forall g\in G(L^{ur}/\mathbb{Q}_{p}).
\end{equation*}

The maximal abelian extension $\mathbb{Q}^{ab}_{p}$ of $\mathbb{Q}_{p}$ is the composite of the maximal unramified extension $\mathbb{Q}^{ur}_{p}$ and the cyclotomic extension $\mathbb{Q}_{p,\infty}$, which is obtained by adjoining all $p$-power roots of $1$. For $g\in G_{\mathbb{Q}_{p}}$ we define $g^{ur}\in Gal(\mathbb{Q}^{ab}_{p}/\mathbb{Q}_{p})$ by $g^{ur}|_{\mathbb{Q}^{ur}_{p}}=g|_{\mathbb{Q}^{ur}_{p}}$ and $g^{ur}|_{\mathbb{Q}_{p,\infty}}=\mathrm{id}$. We also define $g^{ram}\in Gal(\mathbb{Q}^{ab}_{p}/\mathbb{Q}_{p})$ by $g^{ram}|_{\mathbb{Q}^{ur}_{p}}=\mathrm{id}$ and $g^{ram}|_{\mathbb{Q}_{p,\infty}}=g|_{\mathbb{Q}_{p,\infty}}$. Thus $g|_{\mathbb{Q}^{ab}_{p}}=g^{ur}g^{ram}$.

Let $\Gamma(V):=\prod_\mathbb{Z} \Gamma^*(j)^{-h(-j)},$ $\Theta_p(V)$   where $h(j)=\dim_{\mathbb{Q}_p} gr^j D_{dR}(V)$ and  where $\Gamma^*(-j)$ is defined to be
$\Gamma(j)=(j-1)!$ if $j>0$ and $\lim_{s\to j}
(s-j)\Gamma(s)=(-1)^{j}((-j)!)^{-1}$ otherwise.

%\begin{personal}
%Here we have to insert $\Gamma(V)$.
%\end{personal}
We set
\begin{equation*}
\beta_{V,L/K}:=\Gamma(V)\epsilon_{D}(L/K,V) \cdot\alpha_{V,L/K}=(x,\Gamma(V)\epsilon_{D}(L/K,V) \cdot t  \cdot y)=:(x,\tilde{y}),
\end{equation*}
where
\begin{equation*}
\epsilon_{D}(L/K,V):=\Big(\epsilon(\mathrm{D}_{pst}(\mathrm{Ind}_{K/\mathbb{Q}_{p}}(V\otimes \rho^{*}_{\chi})),\psi_\xi,dx)\Big)_{\chi\in Irr(G)}\in K_{1}(\overline{\mathbb{Q}_{p}}[G])
\end{equation*}
%\begin{personal}
%Replacing $\rho^*$ with $\rho$ in this definition we could prove the conjecture, see theorem at the very end. Unfortunately, then the definition would not be compatible neither with \cite{FK} nor \cite{BB}, see also the comment below !!!???
%\end{personal}
where  $\rho^{}_{\chi}$ denotes a representation with character $\chi$ and $\rho^{*}_{\chi}$ its dual.
The right hand side is defined similar to \cite[pp.\ 21-22]{BB} or \cite[3.3.3]{FK} via the theory of local $\epsilon$-constants \`a  la Deligne \cite{del-localconstant}, see also Tate \cite{Ta1}. In particular, our convention is that under the local reciprocity law uniformisers $\pi_K$ correspond to   {\it geometric} Frobenius automorphisms.  Assume that all the   characters of $G$ are defined over  the fixed finite extension $F$ of $\mathbb{Q}_p$. Then we may either fix an embedding $\iota: F\hookrightarrow \overline{\mathbb{Q}_p}$  and use the Weil-Deligne representation $\overline{\mathbb{Q}_p}\otimes_{\mathbb{Q}^{ur}\otimes_{\mathbb{Q}_p}F}\mathrm{D}_{pst}(\mathrm{Ind}_{K/\mathbb{Q}_{p}}(V\otimes \rho^{*}_{\chi}))$ - as we do here - or one can work with all such embeddings simultaneously as in \cite[3.3.3]{FK}. Anyway we choose $\psi_\xi$ to be the additive character associated with the above $\xi$, i.e., $\psi_\xi(p^{-n})=\xi_n,$ $n\geq 1,$ and conductor\footnote{$n(\psi_\xi)$ is defined to be the largest integer $n$ such that $\psi_\xi(p^{-n}\mathbb{Z}_{p})=1$.} $n(\psi_\xi)=0$ as well as $dx$ to the Haar measure on $\mathbb{Q}_p$ such that $\mathbb{Z}_p$ has measure $1$. Hence we often will omit this data in the notation.

Note also that we have canonical isomorphisms
\[V_\rho\otimes_\Lambda \Ind_{L/\qp} T\cong  \zp[G_{\qp}]\otimes_{\zp[G_K]}  (V_\rho\otimes_\Lambda \T)\cong  \zp[G_{\qp}]\otimes_{\zp[G_K]}(V_{\rho^*}\otimes_{\qp} V),\]
whence the $\rho$-component above in the definition of the $\epsilon$-factor is the one which is used by Fukaya and Kato \cite{FK} for the module $V_\rho\otimes_\Lambda \Ind_{L/\qp} T$, compare also with appendix \ref{app}.

According to \cite[Lem.\ 2.4.3]{BB} $\beta_{V,L/K}$ is an element of
\begin{equation*}
\mathrm{Isom}\left(\mathrm{\mathbf{d}}_{\Omega}(\mathrm{D}_{dR}(\mathrm{Ind}_{L/\mathbb{Q}_{p}}V)),
\mathrm{\mathbf{d}}_{\Omega}(\mathrm{Ind}_{L/\mathbb{Q}_{p}}V)\right)\times^{K_{1}(\Omega)}K_{1}(\widehat{L^{ur}}[G])
\end{equation*}
with
\begin{equation*}
g(\tilde{y})=g^{ur}(\tilde{y})=[\mathrm{Ind}_{L/\mathbb{Q}_{p}} V,g^{ur}]\cdot \tilde{y},\;\forall g\in G(L^{ur}/\mathbb{Q}_{p}),
\end{equation*}
i.e. $\tilde{y}\in K_{1}(\tilde{\Omega})_{  [\mathrm{Ind}_{L/\mathbb{Q}_{p}} V,g^{ur}]   }$ in the terminology of \cite{FK}.

{  For a representation $W$ of $G_K$ we denote by $\frak{f}(W)=\pi_K^{a(W)}$ its  local Artin conductor while $f(W)=q_K^{a(W)}=|\frak{f}(W)|_p^{-1}$ denotes its absolute norm, where $q_K$ is the cardinality of the residue class field of $K.$ Setting $\pi_{\mathbb{Q}_p}=p$ we have  $f(W)=p^{a(W)}=\frak{f}(W)$ for every $G_{\mathbb{Q}_p}$-representations $W$.}

{  %Let $\rho^*_\chi$ denote the contragredient representation of the representation $\rho_\chi$.
For $a\in F^\times$ and an additive character $\psi$ of $F$ we denote by $a\psi$ the character sending $x$ to $\psi(ax).$
\begin{lemma}\label{lemma} We have the equality
\begin{eqnarray*}
  \epsilon(\mathrm{D}_{pst}(\mathrm{Ind}_{K/\mathbb{Q}_{p}}(V\otimes \rho_{\chi}^*)),\psi_\xi)&=&\epsilon( \mathrm{Ind}_{K/\mathbb{Q}_{p}}(  \rho^{ }_{\chi}), -\psi_\xi)^{-1} \chi^{ur}(\frak{f}(\mathrm{Ind}_{K/\mathbb{Q}_{p}}(  \rho^{ }_{\chi})))\\
 \end{eqnarray*}
where   $\mathbb{Z}_{p}(\chi^{ur} )=\mathrm{T}_{p}\mathcal{F}(-1)$  and the character $\chi^{ur}=\chi^{ur}_{\mathbb{Q}_{p}}$ is viewed as a character $\mathbb{Q}_{p}^{\times}\rightarrow G^{ab}_{\mathbb{Q}_{p}}\overset{\chi^{ur}_{\mathbb{Q}_{p}}}{\rightarrow} \mathbb{Q}_{p}^{\times}$ via the local reciprocity law sending $p$ to the geometric Frobenius.
\end{lemma}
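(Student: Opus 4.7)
The plan is to deduce the identity from three ingredients: the compatibility of $\mathrm{D}_{pst}$ with induction and tensor product, the behaviour of Deligne's $\epsilon$-factor under twist by an unramified character, and the Deligne local functional equation.

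First, since $\mathrm{D}_{pst}$ is a tensor functor to Weil--Deligne representations that commutes with induction, I rewrite
\[
\mathrm{D}_{pst}\bigl(\mathrm{Ind}_{K/\mathbb{Q}_{p}}(V\otimes \rho_{\chi}^{*})\bigr) \;\cong\; \mathrm{Ind}_{K/\mathbb{Q}_{p}}\bigl(\mathrm{D}_{pst}(V)\otimes \tilde{\rho}_{\chi}^{*}\bigr),
\]
where $\tilde{\rho}_{\chi}$ denotes the Weil--Deligne lift of the Artin character $\rho_{\chi}$ (with trivial monodromy). Because $V=\mathbb{Q}_{p}(\chi^{ur})(1)$ with $\chi^{ur}$ unramified, $\mathrm{D}_{pst}(V)$ is a one-dimensional unramified WD-representation; via the identification $\mathbb{Z}_{p}(\chi^{ur})=\mathrm{T}_{p}\mathcal{F}(-1)$ and the paper's geometric-Frobenius convention, it corresponds to an explicit unramified character $\eta$ of $K^{\times}$ built from $\chi^{ur}$ and a power of $|\cdot|_{K}$.

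Second, I use Deligne's multiplicativity formula for the $\epsilon$-factor under twist by an unramified character,
\[
\epsilon(\eta\otimes \sigma,\psi)\;=\;\eta\bigl(\pi_{K}^{a(\sigma)+n(\psi)\dim\sigma}\bigr)\cdot \epsilon(\sigma,\psi),
\]
together with the inductivity of $\epsilon$ to transport the twist across $\mathrm{Ind}_{K/\mathbb{Q}_{p}}$. Since $n(\psi_{\xi})=0$, the exponent reduces to the Artin conductor $a(\sigma)$, and the twist contribution collapses to $\chi^{ur}(\mathfrak{f}(\mathrm{Ind}_{K/\mathbb{Q}_{p}}\rho_{\chi}))$, using the normalization in which unramified characters of $G_{\mathbb{Q}_{p}}$ are evaluated on fractional ideals via $p\mapsto\text{geometric Frobenius}$. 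Finally, the Deligne local functional equation
\[
\epsilon(\sigma,\psi)\cdot\epsilon(\sigma^{\vee},-\psi)=1,
\]
applied to $\sigma=\mathrm{Ind}_{K/\mathbb{Q}_{p}}\tilde{\rho}_{\chi}$ with $\psi=-\psi_{\xi}$, gives $\epsilon(\mathrm{Ind}_{K/\mathbb{Q}_{p}}\tilde{\rho}_{\chi}^{\vee},\psi_{\xi})=\epsilon(\mathrm{Ind}_{K/\mathbb{Q}_{p}}\tilde{\rho}_{\chi},-\psi_{\xi})^{-1}$; combined with the twist identity this yields the claim.

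The main obstacle is the careful bookkeeping of conventions. I have to verify that the Frobenius-eigenvalue shift produced by the Tate twist $(1)$ in $V$ interacts with the $q_{K}^{\pm 1}$-factors of the unramified-twist and inductivity formulas so that no stray power of $q_{K}$ survives, and that the paper's geometric-Frobenius convention together with the sign identification $\psi_{-\xi}=-\psi_{\xi}$ (i.e.\ $(-\psi)(x)=\psi(-x)=\psi(x)^{-1}$) match those of \cite{del-localconstant} and \cite{FK}. This delicate matching is precisely what forces the residual unramified factor to be exactly $\chi^{ur}(\mathfrak{f}(\mathrm{Ind}_{K/\mathbb{Q}_{p}}\rho_{\chi}))$ with no correction terms.
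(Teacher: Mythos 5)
Your overall strategy --- decompose $\mathrm{D}_{pst}$ through the projection formula, isolate the unramified twist coming from $\mathrm{D}_{pst}(V)$, then apply a duality/functional equation --- is the same as the paper's, which quotes exactly the analogous facts from Fukaya--Kato \cite[3.2.2 (3) and (5)]{FK}. However, two of the identities you actually write down are individually incorrect by a power of $q$, and it is only their accidental cancellation that makes the final answer come out right.

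Concretely, the Deligne--Tate functional equation is \emph{not} $\epsilon(\sigma,\psi)\cdot\epsilon(\sigma^{\vee},-\psi)=1$; it is $\epsilon(\sigma,\psi,dx)\cdot\epsilon(\sigma^{\vee}\otimes\omega_{1},\psi,dx)=\det\sigma(-1)$ with $\omega_{1}=|\cdot|$ (Tate \cite[(3.4.7)]{Ta1}), which after passing from $\psi$ to $-\psi$ becomes $\epsilon(\sigma,\psi)\cdot\epsilon(\sigma^{\vee}\otimes\omega_{1},-\psi)=1$. Stripping off the unramified twist $\omega_{1}$ introduces the factor $\omega_{1}(\mathfrak{f}(\sigma))=q^{-a(\sigma)}$ (since $n(\psi_{\xi})=0$), so in fact $\epsilon(\mathrm{Ind}_{K/\mathbb{Q}_p}\rho_{\chi}^{\vee},\psi_{\xi})\cdot\epsilon(\mathrm{Ind}_{K/\mathbb{Q}_p}\rho_{\chi},-\psi_{\xi})=p^{a(\mathrm{Ind}\rho_{\chi})}\neq 1$. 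Symmetrically, $\mathrm{D}_{pst}(V)$ for $V=\mathbb{Q}_p(\chi^{ur})(1)$ is the unramified character sending the uniformiser $p$ to $p^{-1}\chi^{ur}(Fr_{\mathbb{Q}_p})^{-1}$, i.e.\ it contains a copy of $\omega_{1}$; so the contribution of the twist to the $\epsilon$-factor is $\mathrm{D}_{pst}(V)(\mathfrak{f})=p^{-a}\chi^{ur}(\mathfrak{f})$, not just $\chi^{ur}(\mathfrak{f})$ as you claim. The two stray factors $p^{\mp a}$ cancel, which is the ``delicate matching'' you flag at the end, but your write-up has silently discarded both of them rather than exhibiting the cancellation.

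The paper's proof sidesteps this bookkeeping by passing immediately to the linearised Kummer dual $\mathrm{D}_{pst}(\cdots)^{*}(1)$: the $(1)$-twist absorbs the $\omega_{1}$, so $\mathrm{D}_{pst}(V)^{*}(1)$ is the honest unramified character $g\mapsto\chi^{ur}(Fr_{\mathbb{Q}_p})^{-v(g)}$ with no power of $p$, and Fukaya--Kato's duality \cite[3.2.2(5)]{FK} is then applied directly to $W$ and $W^{*}(1)$ without ever producing a $q$-power to be tracked. You should either follow that route, or replace your statement of the functional equation by the correct one with the $\omega_{1}$-twist and explicitly exhibit the cancellation of the two resulting powers of $p^{a(\mathrm{Ind}\rho_{\chi})}$.
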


\begin{proof}
 Since $\mathrm{D}_{pst}(\mathrm{Ind}_{K/\mathbb{Q}_{p}}(V\otimes \rho^{*}_{\chi}))\cong \mathrm{D}_{pst}(V)\otimes_{\mathbb{Q}^{ur}}\mathrm{D}_{pst}(\mathrm{Ind}_{K/\mathbb{Q}_{p}}(  \rho^{*}_{\chi}))$ we obtain for the linearized Kummer dual \[\mathrm{D}_{pst}(\mathrm{Ind}_{K/\mathbb{Q}_{p}}(V\otimes \rho^{*}_{\chi}))^*(1)\cong \mathrm{D}_{pst}(V)^*(1)\otimes_{\mathbb{Q}^{ur}}\mathrm{D}_{pst}(\mathrm{Ind}_{K/\mathbb{Q}_{p}}(  \rho^{*}_{\chi}))^*.\]
 Now  the (linearised) action on the unramified $\mathrm{D}_{pst}(V)$ is given by the character $g\mapsto (p\chi^{ur}(Fr_{\mathbb{Q}_p}))^{v(g)}$, whence $\mathrm{D}_{pst}(V)^*(1)$ bears the action $g\mapsto \chi^{ur}(Fr_{\mathbb{Q}_p})^{-v(g)}$ while the Weil-Deligne representations $\mathrm{D}_{pst}(\mathrm{Ind}_{K/\mathbb{Q}_{p}}(  \rho_{\chi}^*))^{*}$ and $\mathrm{Ind}_{K/\mathbb{Q}_{p}}(  \rho^{ }_{\chi})$ can be identified. Now the claim follows from \cite[3.2.2 (3) and (5)]{FK} where $\tau$ denotes a geometric Frobenius.
\end{proof}

}

%From the twisting property of the local $\epsilon$-constants (see \cite[(3.4.5)]{Ta1}) we get the equality
%\begin{equation} \label{eq:115}
%\begin{array}{c}
%\epsilon_{D}(L/K,V)=\alpha\cdot \epsilon_{D}(L/K,V(-1)),\quad \alpha=(\alpha_{\chi})\in K_{1}(\overline{\mathbb{Q}_{p}}[G]) \;\text{ with }\\
%\\
%\alpha_{\chi}=\left\|f(\mathrm{Ind}_{K/\mathbb{Q}_{p}}(V(-1)\otimes \rho^{*}_{\chi}))\right\|\cdot p^{n(\psi_\xi)\cdot \dim_{\mathbb{Q}_{p}} \mathrm{Ind}_{K/\mathbb{Q}_{p}}(V(-1)\otimes\rho^{*}_{\chi})},
%\end{array}
%\end{equation}

%,  $f(-)$ is the local Artin conductor, $\left\|-\right\|$ denotes the absolute norm and

The induction property of local Artin conductors (see \cite[Lem.\ 3.3]{Breu}) gives
\begin{equation} \label{eq:118}
 f(\mathrm{Ind}_{K/\mathbb{Q}_{p}}\rho^{*}_{\chi}) = N_{K/\mathbb{Q}_{p}}(f(\rho^{*}_{\chi}))\cdot d_{K/\mathbb{Q}_{p}}^{\chi(1)}
\end{equation}
$d_{K/\mathbb{Q}_{p}}$ being the discriminant of $K/\mathbb{Q}_{p}$.

For later purposes we also introduce the local Galois Gauss sum
%\begin{personal}
%We decide later, whether we take $\pm\psi$ in the definition below.
%\end{personal}
\[\tau_{K}(\chi)=\epsilon(\rho_\chi|\cdot|_p^{\frac{1}{2}}, \psi_{K},dx_{ \psi_{K}})\sqrt{ f(\chi)},\] i.e.,   $dx_{\psi_{K}}$ is the Haar measure of $K$ which is selfdual with respect to the standard additive character $\psi_{K}:=\psi_{\mathbb{Q}_p}\circ Tr_{K/\mathbb{Q}_p}$. {\color{red} Note that this definition coincides with that in \cite{Breu,Fr} although in Breuning's thesis $\overline{\rho}$ instead of $\rho$ shows up   in the definition of $\tau$:   there implicitly the norm rest symbol is normalised by sending $\pi_K$ to the arithmetic Frobenius automorphism, i.e., opposite to the convention used here, whence one has to replace a representation by its contragredient.}

\begin{remark} \label{cor. term}
An easy calculation shows that in our case the factor $\Gamma(V)$, i.e.,   $\Gamma_{L}(V)$ of \cite[3.3.4]{FK}  or $\Gamma^{*}(V)$ of \cite[2.4]{BB}, used for the correction of the comparison isomorphism is equal to $1$.
\end{remark}

\section{Bloch-Kato exponential map}

In this section we follow closely the approach of \cite{BB} to construct an isomorphism
\begin{equation*}
\tilde{\epsilon}_{\Omega,\xi}(\mathrm{Ind}_{L/\mathbb{Q}_{p}} V):\mathrm{\mathbf{d}}_{\tilde{\Omega}}(0)\rightarrow \tilde{\Omega}\otimes_{\Omega}\left\{ \mathrm{\mathbf{d}}_{\Omega} (R\Gamma(L,V)) \cdot \mathrm{\mathbf{d}}_{\Omega}(\mathrm{Ind}_{L/\mathbb{Q}_{p}} V)\right\}
\end{equation*}
satisfying the following condition \newline
$(\star):\quad$ Let $\rho:\Omega\rightarrow GL_{n}(F),\;n\geq 1,\;[F:\mathbb{Q}_{p}]<\infty$ be a continuous representation. Then the image of $\tilde{\epsilon}_{\Omega,\xi}(\mathrm{Ind}_{L/\mathbb{Q}_{p}} V)$ under $F^{n}\otimes_{\Omega}-$ is the $\epsilon$-isomorphism of de Rham representations described in \cite[Sec.\ 3.3]{FK}.

Consider the exact sequence of $\mathbb{Q}_{p}[G]$-modules (see \cite[2.5.1]{BB}):
\begin{equation*}
\xymatrix{
0 \ar[r] & \mathrm{H}^{0}(L,V) \ar[r] & \mathrm{D}^{L}_{cris}(V) \ar[r] & \mathrm{D}^{L}_{cris}(V)\oplus t_{V}(L) \ar[r]^{\quad \;exp_{V}} & \mathrm{H}^{1}(L,V) \ar[r] & \;
} \end{equation*}
\begin{equation} \label{eq:111}
\xymatrix{
\;\ar[r]^{exp^{*}_{V^{*}(1)}\quad\quad\quad\quad\quad\;\;} & \mathrm{D}^{L}_{cris}(V^{*}(1))^{*}\oplus t^{*}_{V^{*}(1)}(L) \ar[r] & \mathrm{D}^{L}_{cris}(V^{*}(1))^{*} \ar[r] & \mathrm{H}^{2}(L,V) \ar[r] & 0,
} \end{equation}
where $exp_{W}:t_{W}(L)\rightarrow \mathrm{H}^{1}_{f}(L,W)$ is the Bloch-Kato exponential map for a de Rham representation $W$. By the proof of Lemma \ref{fg:le:1} we know that $\mathrm{H}^{0}(L,V)$, $\mathrm{H}^{2}(L,V)$, $\mathrm{H}^{1}_{f}(L,V^{*}(1))$, $t_{V^{*}(1)}(L)$ are zeros, whence the exact sequence above degenerates to
\begin{equation*}
\xymatrix{ 0\ar[r] & \mathrm{D}^{L}_{cris}(V^{*}(1))^{*} \ar[r]^{1-\phi^{*}} & \mathrm{D}^{L}_{cris}(V^{*}(1))^{*} \ar[r] & 0
}\end{equation*}
and
\begin{equation*}
\xymatrix{
0 \ar[r] & \mathrm{D}^{L}_{cris}(V) \ar[r] & \mathrm{D}^{L}_{cris}(V)\oplus t_{V}(L) \ar[r]^{\;\;\;\;\;\;exp_{V}} & \mathrm{H}^{1}(L,V)   \ar[r] &  0,
} \end{equation*}
where $\phi\in\mathrm{End}(\mathrm{D}^{L}_{cris}(-))$ is induced by an endomorphism of the ring $B_{cris}$.  Further, using the exact sequence
\begin{equation*}
\xymatrix{
0 \ar[r] & t^{*}_{V^{*}(1)}(L) \ar[r] & \mathrm{D}^{L}_{dR}(V) \ar[r] & t_{V}(L) \ar[r] & 0
}\end{equation*}
($t^{*}_{V^{*}(1)}(L)\cong Fil^{0}\mathrm{D}^{L}_{dR}(V)$) and the isomorphism (of \cite[Lem.\ 1.4.1]{BB})
\begin{equation*}
\frac{\mathrm{D}^{L}_{cris}(V)}{(1-\phi)\mathrm{D}^{L}_{cris}(V)}\cong \frac{\mathrm{H}^{1}_{f}(L,V)}{\mathrm{H}^{1}_{e}(L,V)}=0
\end{equation*}
we deduce that
\begin{equation*}
\mathrm{D}^{L}_{dR}(V)\overset{exp}{\longrightarrow} \mathrm{H}^{1}(L,V) \;\text{ and }\; \mathrm{D}^{L}_{cris}(V)\overset{1-\phi}{\longrightarrow} \mathrm{D}^{L}_{cris}(V)
\end{equation*}
are isomorphisms. The application of the determinant functor to $\eqref{eq:111}$ results in the isomorphism $\mathrm{\mathbf{d}}_{\Omega}(1-\phi)\cdot\mathrm{\mathbf{d}}_{\Omega}(exp^{-1})\cdot \mathrm{\mathbf{d}}_{\Omega}((1-\phi^{*})^{-1})$ sending
\begin{equation*}
\mathrm{\mathbf{d}}_{\Omega} (\mathrm{D}^{L}_{cris}(V))\cdot \mathrm{\mathbf{d}}_{\Omega} (R\Gamma(L,V))^{-1} \cdot \mathrm{\mathbf{d}}_{\Omega} (\mathrm{D}^{L}_{cris}(V^{*}(1))^{*})
\end{equation*}
to
\begin{equation*}
 \mathrm{\mathbf{d}}_{\Omega} (\mathrm{D}^{L}_{cris}(V))\cdot\mathrm{\mathbf{d}}_{\Omega}(\mathrm{D}^{L}_{dR}(V)) \cdot   \mathrm{\mathbf{d}}_{\Omega} (\mathrm{D}^{L}_{cris}(V^{*}(1))^{*}),
\end{equation*}
which after the composition with
\begin{equation*}
\mathrm{id}_{\mathrm{\mathbf{d}}_{\Omega} (\mathrm{D}^{L}_{cris}(V))} \cdot \beta_{V,L/K}\cdot \mathrm{id}_{\mathrm{\mathbf{d}}_{\Omega} (\mathrm{D}^{L}_{cris}(V^{*}(1))^{*})}
\end{equation*}
and multiplication with
\begin{equation*}
\mathrm{id}_{\mathrm{\mathbf{d}}_{\Omega} (\mathrm{D}^{L}_{cris}(V))^{-1}}\cdot\mathrm{id}_{\mathrm{\mathbf{d}}_{\tilde{\Omega}} (R\Gamma(L,V))}\cdot\mathrm{id}_{\mathrm{\mathbf{d}}_{\tilde{\Omega}} (\mathrm{D}^{L}_{cris}(V^{*}(1))^{*})^{-1}}
\end{equation*}
gives the desired isomorphism $\tilde{\epsilon}_{\Omega,\xi}(\mathrm{Ind}_{L/\mathbb{Q}_{p}} V)$. Note that $\tilde{\epsilon}_{\Omega,\xi}(\mathrm{Ind}_{L/\mathbb{Q}_{p}} V)$ satisfies the condition $(\star)$ automatically by construction (cf. the construction of the $\epsilon$-isomorphisms of de Rham representations in \cite[Sec.\ 3.3]{FK} and Remark \ref{cor. term}).

Let $\mathcal{G}$ be a commutative formal Lie group of finite height over $\mathcal{O}_{K}$ and $W$ be a $p$-adic de Rham representation coming from the $p$-adic Tate module of $\mathcal{G}$. In \cite[pp.\ 359-360]{BK} is described a commutative diagram, which connects the Bloch-Kato exponential map with the classical exponential map of $\mathcal{G}$:
\begin{equation*}
\xymatrix{
\mathrm{tan}(\mathcal{G}_{K})(L) \ar[r]^{exp} \ar[d]^{=} & \mathcal{G}(\mathfrak{p}_{L})\otimes \mathbb{Q}_{p} \ar[d] \\
t_{W}(L) \ar[r]^{exp} & \mathrm{H}^{1}(L,W),
} \end{equation*}
where $t_{W}(L)$ is identified with the tangent space of $\mathcal{G}_{K}$, the upper (resp. lower) $exp$ is the exponential map in the classical sense (resp. Bloch-Kato) and the right vertical map is the boundary map of the Kummer sequence \eqref{fg:eq:3}.

By Proposition \ref{fg:pr:1}(3) and Lemma \ref{fg:le:1} the representation $T$ being the $p$-adic Tate module of a formal group $\mathcal{F}$ (see \cite[Exm.\ 5.20]{Iz}) fits into the commutative diagram

\begin{equation} \label{log:eq:1}
\xymatrix{
  \mathcal{F}(\mathfrak{p}_{L}) \ar[r]^{\cong} \ar[d] & \mathrm{H}^{1}(L,T)   \ar[d]^{i}    \\
   t_{V}(L) \ar[r]^{exp\;\;}_{\cong} & \mathrm{H}^{1}(L,V)  ,
} \end{equation}
where the left vertical arrow is a $\Lambda$-homomorphism induced by the classical logarithm $log_{\mathcal{F}}$ of $\mathcal{F}$.

\section{Formulation of the $\epsilon$-conjecture}

In this section we follow closely the approach of \cite{Breu} to formulate an $\epsilon$-conjecture in the language of relative $K_{0}$-groups. In the previous section we constructed an isomorphism
\begin{flushleft}
\small{
\begin{equation*}
\tilde{\epsilon}_{\Omega,\xi}(\mathrm{Ind}_{L/\mathbb{Q}_{p}} V):=(x,\tilde{y})\in \mathrm{Isom}(\mathrm{\mathbf{d}}_{\Omega}(0),\mathrm{\mathbf{d}}_{\Omega} (R\Gamma(L,V)) \cdot \mathrm{\mathbf{d}}_{\Omega}(\mathrm{Ind}_{L/\mathbb{Q}_{p}} V))\times^{K_{1}(\Omega)}K_{1}(\tilde\Omega)
\end{equation*}}\normalsize
\end{flushleft}
satisfying condition $(\star)$. {   Here we can and do assume that $x$ arises by base change from an isomorphism $ \mathrm{\mathbf{d}}_{\Lambda}(0)\cong\mathrm{\mathbf{d}}_{\Lambda} (R\Gamma(L,T)) \cdot \mathrm{\mathbf{d}}_{\Lambda}(\mathrm{Ind}_{L/\mathbb{Q}_{p}} T)$.}

In view of \cite[Prop.\ 3.1.3]{FK} and of the localization exact sequence for $K$-groups
\begin{equation*}
\xymatrix{
1 \ar[r] & K_{1}(\tilde\Lambda) \ar[r] & K_{1}(\tilde\Omega) \ar[r]^{\partial\;\;\;} & K_{0}(\tilde\Lambda,\widehat{\mathbb{Q}^{ur}_{p}}) \ar[r] & 0
}\end{equation*}
($SK_{1}(\tilde\Lambda)$ being trivial by \cite[Cor.\ 2.28]{iz-ven} and the map $\partial$ being surjective by \cite[Lem.\ 2.5 and Sec.\ 2.4.4]{Breu}) we formulate
\begin{conjecture}[$C^{na}_{ep}(L/K,V)$]
With the notation as above we have $\partial(\tilde{y})=0$ in $K_{0}(\tilde\Lambda,\widehat{\mathbb{Q}^{ur}_{p}})$.
\end{conjecture}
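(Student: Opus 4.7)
The strategy is to verify $\partial(\tilde{y})=0$ by exhibiting an explicit preimage of $\tilde{y}$ in $K_{1}(\tilde\Lambda)$, adapting Breuning's approach in the case $V=\mathbb{Q}_{p}(1)$ by replacing cyclotomic units with torsion points and logarithmic images inside $\mathcal{F}(\mathfrak{p}_{L^{ur}})$ of the Lubin-Tate group $\mathcal{F}$. I use crucially that $L/K$ is unramified of degree prime to $p$, so that $I=1$, the group algebra $\tilde\Lambda$ is a maximal order in $\tilde\Omega$, and integrality obstructions in $K_{0}(\tilde\Lambda,\widehat{\mathbb{Q}^{ur}_{p}})$ can be detected quite concretely.

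First I would unwind the explicit form of $\tilde{y}$ using Proposition \ref{fg:pr:1}: on one side $\mathrm{H}^{1}(L,T)\cong\mathcal{F}(\mathfrak{p}_{L})$ provides the first factor of $\mathbf{d}_{\Lambda}(R\Gamma(L,T))$, while $\mathrm{H}^{2}(L,T)$ is presented by the explicit two-term complex with boundary $1-\chi^{ur}(Fr_{K})^{-1}Fr_{K}$ on $\mathbb{Z}_{p}[G/I]$, contributing the determinant factor $1-\chi^{ur}(Fr_{L})$. Via the commutative diagram \eqref{log:eq:1} the Bloch-Kato exponential $\exp_{V}$ is replaced by the classical formal group logarithm $\log_{\mathcal{F}}$, so that the de Rham contribution to $\tilde{y}$ becomes a resolvent-type expression obtained by applying $\log_{\mathcal{F}}$ to a $\Lambda$-basis of $\mathbb{Q}_{p}\otimes\mathcal{F}(\mathfrak{p}_{L})$ and comparing with the canonical $\widehat{\mathbb{Q}^{ur}_{p}}$-basis of $\mathrm{D}^{L}_{cris}(V)$ coming from the periods of $\mathcal{F}$.

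Next, using Lemma \ref{lemma} I would expand $\epsilon_{D}(L/K,V)$ as a product of inverse $\epsilon$-factors of $\mathrm{Ind}_{K/\mathbb{Q}_{p}}\rho_{\chi}$ (which, via the definition of $\tau_{K}(\chi)$ and the inductivity formula \eqref{eq:118}, translate into local Galois Gauss sums multiplied by explicit half-integral powers of $f(\chi)$ and $d_{K/\mathbb{Q}_{p}}^{\chi(1)}$) together with the unramified twist $\chi^{ur}(\mathfrak{f}(\mathrm{Ind}_{K/\mathbb{Q}_{p}}\rho_{\chi}))$, which is manifestly a unit in $\tilde\Lambda$. In the tame setting these Gauss sums are classical and their $G_{\mathbb{Q}_{p}}$-equivariance properties match the required formula $g(\tilde{y})=[\mathrm{Ind}_{L/\mathbb{Q}_{p}}V,g^{ur}]\cdot\tilde{y}$. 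Assembling these pieces produces a candidate element $\eta\in K_{1}(\tilde\Omega)$ whose equality with $\tilde{y}$ follows componentwise from condition $(\star)$ and the explicit formula for the de Rham $\epsilon$-isomorphism in \cite[Sec.\ 3.3]{FK}.

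The main obstacle is proving integrality of $\eta$, that is, that it lifts to $K_{1}(\tilde\Lambda)$. This amounts to a functional equation relating the formal-group resolvent built from $\log_{\mathcal{F}}$ to the product of local Gauss sums twisted by $\chi^{ur}$, generalising the Bley-Burns congruences underlying Breuning's proof. I expect to derive it by combining a Coleman-power-series description of $\log_{\mathcal{F}}$ on $\mathcal{F}(\mathfrak{p}_{L^{ur}})$, which identifies the resolvent with a twisted Gauss-type sum in $\widehat{\mathbb{Q}^{ur}_{p}}[G]$, with the Dwork-Langlands formula for tame Gauss sums. Since $\tilde\Lambda$ is a maximal order under our hypotheses, the integrality verification can finally be reduced to a comparison of modular reductions, which should follow from a Honda-type explicit reciprocity law for the Lubin-Tate group $\mathcal{F}$.
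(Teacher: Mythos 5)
Your opening moves are aligned with the paper: you correctly identify that the two inputs are (i) the passage from the Bloch--Kato exponential to the formal-group logarithm $\log_{\mathcal{F}}$ via diagram \eqref{log:eq:1} together with the explicit two-term presentation of $\mathrm{H}^{2}(L,T)$ from Proposition \ref{fg:pr:1}(4), and (ii) the unwinding of $\epsilon_{D}(L/K,V)$ via Lemma \ref{lemma} into inverse $\epsilon$-constants $\epsilon(\mathrm{Ind}_{K/\mathbb{Q}_{p}}\rho_{\chi},-\psi_{\xi})^{-1}$ and the unramified-twist factor $\chi^{ur}(\mathfrak{f}(\mathrm{Ind}_{K/\mathbb{Q}_{p}}\rho_{\chi}))$. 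This is the right skeleton.

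However, your treatment of the decisive integrality step is off target, and this is where the proposal has a genuine gap. You propose to interpret the image of a $\Lambda$-basis under $\log_{\mathcal{F}}$ as a ``formal-group resolvent'' and to prove integrality by a Coleman-power-series description of $\log_{\mathcal{F}}$, the Dwork--Langlands formula and a Honda-type explicit reciprocity law for $\mathcal{F}$. None of this is needed, and you have not supplied it; you acknowledge this yourself when you call it ``the main obstacle''. The paper's key observation is that the Lubin-Tate structure contributes no new transcendental input at all. The formal-group logarithm enters only through the fact that $\log_{\mathcal{F}}:\mathcal{F}(\mathfrak{p}_{L}^{n_{0}})\to\mathfrak{p}_{L}^{n_{0}}$ is an isomorphism for $n_{0}\gg 0$, so its contribution to the refined Euler characteristic is the purely integral term $\partial({}^{\sharp}(q_{K}e_{I}))$ in \eqref{log:eq:6}, and this cancels exactly against $U_{cris}$ via Lemma \ref{le:81}. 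Likewise, the $\chi^{ur}$-twist in the $\epsilon$-factor is absorbed into a unit $v\in\Lambda^{\times}$ by Lemma \ref{le:80}. After these cancellations the conjecture reduces (Theorem \ref{th:100} and \eqref{reduction4}) to the integrality of $\mathcal{N}_{K/\mathbb{Q}_{p}}(b|\chi)/\tau_{K}(\rho_{\chi})$ — which is untouched by the Lubin-Tate twist — and that is a classical theorem of M.~Taylor on tame Galois Gauss sums (\cite[Thm.\ 31]{Fr}, see also \cite[(31)]{Breu}), the very same ingredient Breuning uses for $V=\mathbb{Q}_{p}(1)$. In short, the paper reduces the Lubin-Tate case to the $\mathbb{G}_{m}$ case; your proposal instead tries to re-prove and generalize the hard classical input, which is both unnecessary and unsubstantiated.

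Two smaller but real problems. First, you assume $I=1$ and $\tilde\Lambda$ a maximal order, citing the abstract; but the Main Theorem of the paper covers all (at most) tamely ramified $L/K$, where $I\neq 1$ is allowed and $G$ may have order divisible by $p$ (e.g., unramified extensions of degree $p$), so $\tilde\Lambda$ need not be maximal. Several of the paper's lemmas (notably Lemma \ref{le:80} and the cancellations involving $e_{I}$) are precisely designed to handle nontrivial inertia; assuming it away loses the stated generality. Second, the resolvent $\theta_{\chi}=\delta_{K}^{\chi(1)}\mathcal{N}_{K/\mathbb{Q}_{p}}(b|\chi)$ of Lemma \ref{le:79} comes from the comparison isomorphism $comp_{V}$ (indeed, after reduction to the unramified, $\mathbb{C}_{p}$-admissible representation $V(-1)$), not from applying $\log_{\mathcal{F}}$ to a basis; conflating these two sources of the resolvent leads you to believe a new Lubin-Tate explicit reciprocity is required where a classical Fröhlich resolvent already suffices.
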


The complex $R\Gamma(L,T)$ is a perfect complex of  $\Lambda$-modules and $\mathrm{Ind}_{L/\mathbb{Q}_{p}} T$ is a f.g.\ projective $\Lambda$-module, thus $M^{\bullet}:=R\Gamma(L,T)\oplus \mathrm{Ind}_{L/\mathbb{Q}_{p}} T [0]$ is a perfect complex of  $\Lambda$-modules with $\mathrm{H}^{0}(M^{\bullet})\cong \mathrm{Ind}_{L/\mathbb{Q}_{p}} T$, $\mathrm{H}^{1}(M^{\bullet})\cong \mathrm{H}^{1}(L,T)$, $\mathrm{H}^{2}(M^{\bullet})\cong \mathrm{H}^{2}(L,T)$ and $\mathrm{H}^{i}(M^{\bullet})=0$ for $i\geq 3$. There is an isomorphism
\begin{equation*}
\xymatrix{
comp_{V} \circ exp^{-1} :\mathrm{H}^{1}(B_{dR}\otimes M^{\bullet})\rightarrow \mathrm{H}^{0}(B_{dR}\otimes M^{\bullet}),
} \end{equation*}
and we define $C_{L/K}:=\chi(M^{\bullet}, comp_{V} \circ exp^{-1})\in K_{0}(\tilde{\Lambda}, B_{dR})$ to be the refined Euler characteristic (see \cite[2.6]{Breu}).

Set
\begin{equation*}
U_{cris}:=\partial([\mathrm{D}^{L}_{cris}(V),1-\phi])+\partial([\mathrm{D}^{L}_{cris}(V^{*}(1))^{*},(1-\phi^{*})^{-1}])\in K_{0}(\Lambda,\mathbb{Q}_{p}).
\end{equation*}
Note that
\begin{equation*}
\partial([\mathrm{D}^{L}_{cris}(V^{*}(1))^{*},(1-\phi^{*})^{-1}])=-\partial([\mathrm{D}^{L}_{cris}(V^{*}(1)),1-\phi]),
\end{equation*}
since $(-)^{-1}$ induces multiplication with $-1$ on relative $K_{0}$-groups, so that
\begin{equation*}
U_{cris}=\partial([\mathrm{D}^{L}_{cris}(V),1-\phi])-\partial([\mathrm{D}^{L}_{cris}(V^{*}(1)),1-\phi]).
\end{equation*}
Finally, the multiplication of $\tilde{\alpha}_{V,L/K}$ with $t $ and the equivariant $\epsilon$-factor translates in the language of relative $K_{0}$-groups into the summation of their images under $\partial$.

Consider the class
\begin{equation*}
C_{L/K}+\partial(t)+\partial(\epsilon_{D}(L/K,V)).
\end{equation*}
This belongs to $K_{0}(\tilde{\Lambda}, \widehat{\mathbb{Q}^{ur}_{p}})$, because it is invariant under the action of $G_{\mathbb{Q}^{ur}_{p}}$ and $K_{0}(\tilde{\Lambda}, \widehat{\mathbb{Q}^{ur}_{p}})=K_{0}(\tilde{\Lambda}, B_{dR})^{G_{\mathbb{Q}^{ur}_{p}}}$. The conjecture $C^{na}_{ep}(L/K,V)$ takes the form:
\begin{equation} \label{eq:82}
C_{L/K}+U_{cris}
%[\mathrm{H}^{1}_{f}(L,T^{*}(1))^{*},exp^{*}_{V^{*}(1)},0]
+\partial(t)+\partial(\epsilon_{D}(L/K,V))=0
\end{equation}
in $K_{0}(\tilde{\Lambda}, \widehat{\mathbb{Q}^{ur}_{p}})$. %where the term $[\mathrm{H}^{1}_{f}(L,T^{*}(1))^{*},exp^{*}_{V^{*}(1)},0]$ comes from the fact that
%$\mathrm{H}^{1}_{f}(L,T^{*}(1))^{*}\subseteq \mathrm{H}^{1}(L,T)$ is the kernel of $log_{\mathcal{F}}$ (cf. \eqref{log:eq:1}).
%{\color{red} der Term $[\mathrm{H}^{1}_{f}(L,T^{*}(1))^{*},exp^{*}_{V^{*}(1)},0]$ steckt schon als $H^2$ in $C_{L/K}$! }

\section{Tamely ramified extension}

Let $L/K$ be a tamely ramified extension, then $\mathcal{O}_{L}$ is a f.g.\ projective $\Lambda$-module (see \cite[Cor.\ 1]{Fr}). We fix an element $t^{ur}\in (\widehat{\mathbb{Z}^{ur}_{p}}^{\times}(\chi^{ur}_{\mathbb{Q}_{p}}))^{Gal(\overline{\mathbb{Q}_{p}}/\mathbb{Q}_{p})}$. Note that two such elements differ by an element of $\mathbb{Z}_{p}^{\times}$. Then $\mathcal{O}_{L}$ and $(\mathcal{O}_{\widehat{L^{ur}}}(\chi^{ur}))^{G(L^{ur}/L)}$ are isomorphic as $\Lambda$-modules by just sending $l\in \mathcal{O}_{L}$ to $t^{ur}\cdot l \in (\mathcal{O}_{\widehat{L^{ur}}}(\chi^{ur}))^{G(L^{ur}/L)}$. Thus every element $\tilde{l}\in (\mathcal{O}_{\widehat{L^{ur}}}(\chi^{ur}))^{G(L^{ur}/L)}$ can be written as
\begin{equation*}
\tilde{l}=t^{ur}\cdot l=t^{ur}\cdot \sum_{g\in G} a_{g}  g(b), \quad a_{g}\in \mathcal{O}_{K}
\end{equation*}
for a normal integral basis $b$ of $\mathcal{O}_{L}$ over $\mathcal{O}_{K}$ \cite[Cor.\ 1]{Fr}. Obviously, the same is true for $L$ and $(\widehat{L^{ur}}(\chi^{ur}))^{G(L^{ur}/L)}$.

Denote by $v$ a basis of $\mathbb{Z}_{p}(\chi^{ur}_{\mathbb{Q}_{p}})$, then the element $v\otimes \xi$ substitutes a basis of $T$. Moreover,
$\mathrm{D}_{dR}(\mathrm{Ind}_{L/\mathbb{Q}_{p}}V)\cong\mathrm{D}^{L}_{dR}(V)$ (resp.  $\mathrm{D}_{dR}(\mathrm{Ind}_{L/\mathbb{Q}_{p}}V(-1))\cong\mathrm{D}^{L}_{dR}(V(-1))$ is a one-dimensional $L$-vector space with the basis $e_{\chi^{ur}_{\mathbb{Q}_{p}},1}:=t^{ur}\cdot t^{-1}\otimes (v\otimes \xi)$  (resp. $e_{\chi^{ur}_{\mathbb{Q}_{p}},0}:=t^{ur}\otimes v$). In particular, they are isomorphic as $\Omega$-modules and we have a commutative diagram of $B_{dR}[G]$-modules (with an action of $G_{\mathbb{Q}_{p}}$)

\begin{equation} \label{eq:116}
\xymatrix{
B_{dR}\otimes_{\mathbb{Q}_{p}} \mathrm{D}^{L}_{dR}(V) \ar[rr]^{comp_{V}}_{\cong} \ar[d] && B_{dR}\otimes_{\mathbb{Q}_{p}} \mathrm{Ind}_{L/\mathbb{Q}_{p}}V \ar[d]^{t\cdot\otimes f}\\
B_{dR}\otimes_{\mathbb{Q}_{p}} \mathrm{D}^{L}_{dR}(V(-1)) \ar[rr]^{comp_{V(-1)}}_{\cong} && B_{dR}\otimes_{\mathbb{Q}_{p}} \mathrm{Ind}_{L/\mathbb{Q}_{p}}V(-1),\\
} \end{equation}
where the map $t\cdot$ is the multiplication with $t$ and $f(v\otimes \xi)=v$.
\begin{flushleft}
\textbf{Warning:} the left vertical arrow in the above diagram is an isomorphism of $\Omega$-modules induced by $e_{\chi^{ur}_{\mathbb{Q}_{p}},1}\mapsto e_{\chi^{ur}_{\mathbb{Q}_{p}},0}$, whereas the right vertical arrow is an isomorphism of $B_{dR}[G]$-modules with an action of $G_{\mathbb{Q}_{p}}$ and is responsible for the later normalization on $K_{1}$-groups.
\end{flushleft}

Set $K^{\bullet}:=R\Gamma(L,T)\oplus \mathcal{O}_{L}e_{\chi^{ur}_{\mathbb{Q}_{p}},1}[0]$, a perfect complex of  $\Lambda$-modules with
\begin{align*}
\mathrm{H}^{0}(K^{\bullet})\cong \mathcal{O}_{L}e_{\chi^{ur}_{\mathbb{Q}_{p}},1}\text{, }\quad\mathrm{H}^{1}(K^{\bullet})\cong \mathrm{H}^{1}(L,T){\color{black}\text{, }\quad\mathrm{H}^{2}(K^{\bullet})\cong \mathrm{H}^{2}(L,T)}\\ \quad\text{and}\quad\mathrm{H}^{i}(K^{\bullet})=0\text{ for }i\geq 2.\phantom{mmmmmmmmmmmmmmmmmmmmmm}
\end{align*}
The composition rule for the refined Euler characteristic gives the equality
\begin{equation} \label{log:eq:2}
C_{L/K}=\chi(K^{\bullet},exp^{-1})+ [\mathcal{O}_{L}e_{\chi^{ur}_{\mathbb{Q}_{p}},1},comp_{V},\mathrm{Ind}_{L/\mathbb{Q}_{p}} T]\\
\end{equation}
in $K_{0}(\tilde{\Lambda}, B_{dR})$.

Recall that $\mathcal{F}(\mathfrak{p}_{L})$ is a cohomologically trivial $\Lambda$-module (see \cite[Prop.\ 3.9]{CG}). Moreover, by \cite[Lem.\ 1.1]{BKS} $\mathcal{F}(\mathfrak{p}_{L})[-1]$ is a perfect complex of $\Lambda$-modules. We set
\begin{equation*}
E_{L/K}(\mathcal{F}(\mathfrak{p}_{L})):=\mathcal{F}(\mathfrak{p}_{L})[-1]\oplus \mathcal{O}_{L}[0],
\end{equation*}
a perfect complex of $\Lambda$-modules with
\begin{equation*}\mathrm{H}^{0}(E_{L/K}(\mathcal{F}(\mathfrak{p}_{L})))\cong \mathcal{O}_{L},\;\; \mathrm{H}^{1}(E_{L/K}(\mathcal{F}(\mathfrak{p}_{L})))\cong \mathcal{F}(\mathfrak{p}_{L})
\end{equation*}
and $\mathrm{H}^{i}(E_{L/K}(\mathcal{F}(\mathfrak{p}_{L})))=0$ for $i\geq 2$. Using the identification
\begin{equation*}
Le_{\chi^{ur}_{\mathbb{Q}_{p}},1}=\mathrm{D}^{L}_{dR}(V)=t_{V}(L)\cong \hat{\mathbb{G}}_{a}(L)= L,
\end{equation*}
the $\Lambda$-module isomorphism $\mathcal{O}_{L}e_{\chi^{ur}_{\mathbb{Q}_{p}},1}\cong \mathcal{O}_{L}$ and the diagram \eqref{log:eq:1} we get the equality
\begin{equation} \label{eq:83}
\chi((K^{\bullet}),exp^{-1}) = \chi(E_{L/K}(\mathcal{F}(\mathfrak{p}_{L})), log_{\mathcal{F}}) + { [{\mathbb{Z}_p[G/I]}, {1-\chi^{ur}(Fr_K)^{-1}Fr_K} , {\mathbb{Z}_p[G/I]}]} \;
\end{equation}
$\text{ in }K_{0}(\Lambda, \mathbb{Q}_{p})$, the maps $exp^{-1}$ and $log_{\mathcal{F}}$ being $\Omega$-module-isomorphisms. {  Here the term  $[{\mathbb{Z}_p[G/I]}, {1-\chi^{ur}(Fr_K)^{-1}Fr_K} , {\mathbb{Z}_p[G/I]}]$ represents $\mathrm{H}^2(L,T)$ by Proposition \ref{fg:pr:1} (4).}

Now let $n_{0}\in \mathbb{N}$ be big enough such that $\mathfrak{p}^{n_{0}}_{L}$ is a projective $\Lambda$-module and
\begin{equation*}
log_{\mathcal{F}}: \mathcal{F}(\mathfrak{p}^{n_{0}}_{L}) \overset{\cong}{\longrightarrow} \hat{\mathbb{G}}_{a}(\mathfrak{p}^{n_{0}}_{L})=\mathfrak{p}^{n_{0}}_{L}.
\end{equation*}
Then $\mathcal{F}(\mathfrak{p}^{n_{0}}_{L})$ is a projective $\Lambda$-submodule of finite index in
$\mathcal{F}(\mathfrak{p}_{L})$, hence we can define $E_{L/K}(\mathcal{F}(\mathfrak{p}^{n_{0}}_{L}))$ analogously to the previous consideration. But
\begin{equation*}
[\mathcal{F}(\mathfrak{p}^{n_{0}}_{L}), log_{\mathcal{F}}, \mathfrak{p}^{n_{0}}_{L}]=0 \text{ in } K_{0}(\Lambda, \mathbb{Q}_{p}),
\end{equation*}
so that
\begin{equation} \label{log:eq:3}
\begin{array}{rcl}
\chi(E_{L/K}(\mathcal{F}(\mathfrak{p}^{n_{0}}_{L})), log_{\mathcal{F}}) & = & [\mathcal{F}(\mathfrak{p}^{n_{0}}_{L}), log_{\mathcal{F}}, \mathfrak{p}^{n_{0}}_{L}]+[\mathfrak{p}^{n_{0}}_{L},id,\mathcal{O}_{L}]\\
\\
& = & [\mathfrak{p}^{n_{0}}_{L},id,\mathcal{O}_{L}].
\end{array}
\end{equation}

The exact sequence
\begin{equation*}
\xymatrix{
0 \ar[r] & \mathcal{F}(\mathfrak{p}^{n_{0}}_{L}) \ar[r]^{s} & \mathcal{F}(\mathfrak{p}_{L}) \ar[r] & \mathcal{F}(\mathfrak{p}_{L})/\mathcal{F}(\mathfrak{p}^{n_{0}}_{L}) \ar[r] & 0
}\end{equation*}
gives rise to a distinguished triangle of perfect complexes of $\Lambda$-modules
\begin{equation*}
\xymatrix{
E_{L/K}(\mathcal{F}(\mathfrak{p}^{n_{0}}_{L})) \ar[r]^{j} & E_{L/K}(\mathcal{F}(\mathfrak{p}_{L})) \ar[r] & \mathrm{cone}(j),
}\end{equation*}
where $j=s_{*}\oplus id_{\mathcal{O}_{L}}$, such that $\mathrm{H}^{0}(\mathrm{cone}(j))=0$, $\mathrm{H}^{1}(\mathrm{cone}(j))\cong \mathcal{F}(\mathfrak{p}_{L})/\mathcal{F}(\mathfrak{p}^{n_{0}}_{L})$ and $\mathrm{H}^{i}(\mathrm{cone}(j))=0$ for $i\geq 2$. This triangle together with \eqref{log:eq:3} leads to the equalities
\begin{equation} \label{log:eq:4}
\begin{array}{rcl}
\chi(E_{L/K}(\mathcal{F}(\mathfrak{p}_{L})), log_{\mathcal{F}}) & = & \chi(E_{L/K}(\mathcal{F}(\mathfrak{p}^{n_{0}}_{L})), log_{\mathcal{F}})+\chi(\mathrm{cone}(j),0)\\
\\
& = & [\mathfrak{p}^{n_{0}}_{L},id,\mathcal{O}_{L}]+\chi(\mathcal{F}(\mathfrak{p}_{L})/\mathcal{F}(\mathfrak{p}^{n_{0}}_{L})[-1],0).\\

\end{array}
\end{equation}

The quotients $\mathcal{F}(\mathfrak{p}_{L})/\mathcal{F}(\mathfrak{p}^{n_{0}}_{L})$ and $\mathfrak{p}_{L}/\mathfrak{p}^{n_{0}}_{L}$ are filtered by the images of $\mathcal{F}(\mathfrak{p}^{i}_{L})$ and $\mathfrak{p}^{i}_{L}$ , respectively, for $i\geq 1$. The associated graded objects considered as complexes are canonically isomorphic perfect complexes of $\Lambda$-modules, thus by \cite[Prop.\ 2.1(iii)]{BlB} we have the equality
\begin{equation} \label{eq:78}
\begin{array}{rcl}
\chi(\mathcal{F}(\mathfrak{p}_{L})/\mathcal{F}(\mathfrak{p}^{n_{0}}_{L})[-1],0) & = & \chi( \mathfrak{p}_{L}/\mathfrak{p}^{n_{0}}_{L}[-1],0)\\
\\
& = & [\mathfrak{p}_{L},id,\mathfrak{p}^{n_{0}}_{L}]\\
\\
& = & [\mathcal{O}_{L},id,\mathfrak{p}^{n_{0}}_{L}]-[\mathcal{O}_{L},id,\mathfrak{p}_{L}].
\end{array}
\end{equation}

Let $q_{K}=p^{f}:=[\mathcal{O}_{K}:\mathfrak{p}_{K}]$ and $e_{I}:=\frac{1}{\left|I\right|}\sum_{i\in I}i$ be the idempotent of $\Omega$ associated to the inertia subgroup $I$ of $G$. Let $^{\sharp}x\in K_{1}(\Omega)\subset K_{1}(\tilde{\Omega})$ be defined for every element $x\in Cent(\Omega)$ as follows. If $Cent(\Omega)=\prod F_{i}$ is the Wedderburn decomposition of $Cent(\Omega)$ into a product of fields and $x=(x_{i})$ under this decomposition, then $^{\sharp}x=(^{\sharp}x_{i})$ with $^{\sharp}x_{i}=x_{i}$ if $x_{i}\neq 0$ and $^{\sharp}x_{i}=1$ if $x_{i}=0$.

The normal basis theorem for $\mathcal{O}_{L}/\mathfrak{p}_{L}$ over $\mathbb{Z}_{p}/p\mathbb{Z}_{p}$ implies that there exists a short exact sequence of $G/I$-modules
\begin{equation*}
\xymatrix{
0 \ar[r] & p\cdot \mathbb{Z}_{p}[G/I]^{f} \ar[r] & \mathbb{Z}_{p}[G/I]^{f} \ar[r] & \mathcal{O}_{L}/\mathfrak{p}_{L} \ar[r] & 0.
}\end{equation*}
Using this sequence we compute that
\begin{equation} \label{log:eq:5}
[\mathcal{O}_{L},id,\mathfrak{p}_{L}]= -\partial(^{\sharp}(q_{K} e_{I})).
\end{equation}
Observing \eqref{eq:78} and \eqref{log:eq:5}  the equality \eqref{log:eq:4} becomes
\begin{equation} \label{log:eq:6}
\begin{array}{rcl}
\chi(E_{L/K}(\mathcal{F}(\mathfrak{p}_{L})), log_{\mathcal{F}}) & = &
[\mathfrak{p}^{n_{0}}_{L},id,\mathcal{O}_{L}]+[\mathcal{O}_{L},id,\mathfrak{p}^{n_{0}}_{L}]+\partial(^{\sharp}(q_{K} e_{I}))\\
\\
& = &  \partial(^{\sharp}(q_{K} e_{I})).\\
\end{array}
\end{equation}

Write $\Sigma(L)$ for the set of all embeddings $L\rightarrow \overline{\mathbb{Q}_{p}}$ fixing $\mathbb{Q}_{p}$. For each $\sigma \in \Sigma(K)$ we fix $\hat{\sigma}\in \Sigma(L)$ such that $\hat{\sigma}|_{K}=\sigma$. Let $b\in \mathcal{O}_{L}$ be a $K[G]$-basis of $L$ and let $\chi$ be an irreducible $\overline{\mathbb{Q}_{p}}$-valued character of $G$. The norm resolvent is defined by
\begin{equation*}
\mathcal{N}_{K/\mathbb{Q}_{p}}(b|\chi):=\prod_{\sigma\in \Sigma(K)} \mathrm{Det}_{\chi}(\sum_{g\in G}\hat{\sigma}(g(b))g^{-1})\in \overline{\mathbb{Q}_{p}}^{\times},
\end{equation*}
where $\mathrm{Det}_{\chi}$ is the homomorphism $\overline{\mathbb{Q}_{p}}[G]^{\times}\rightarrow \overline{\mathbb{Q}_{p}}^{\times}$ given by
\begin{equation*}
\mathrm{Det}_{\chi}(\sum_{g\in G}a_{g}g):=\det(\sum_{g\in G}a_{g}\rho_{\chi}(g))
\end{equation*}
and $\rho_{\chi}:G\rightarrow GL_{\chi(1)}(\overline{\mathbb{Q}_{p}})$ is a matrix representation with character $\chi$.
Note that the definition of $\mathcal{N}_{K/\mathbb{Q}_{p}}(b|\chi)$ depends on the choice of the $\hat{\sigma}$. We also let $\left\{ a_{\sigma}:\sigma\in \Sigma(K)\right\}$ be a fixed $\mathbb{Z}_{p}$-basis of $\mathcal{O}_{K}$ and define
\begin{equation*}
\delta_{K}:=det((\eta(a_{\sigma}))_{\eta,\sigma\in \Sigma(K)})\in \overline{\mathbb{Q}_{p}}^{\times}.
\end{equation*}
This is a square root of the discriminant of $K$ and depends on the choice of the $a_{\sigma}$.

\begin{lemma} \label{le:79}
There is an equality
\begin{equation*}
[\mathcal{O}_{L}e_{\chi^{ur}_{\mathbb{Q}_{p}},1},comp_{V},\mathrm{Ind}_{L/\mathbb{Q}_{p}} T]+\partial(t)=\partial(\theta) \text{ in } K_{0}(\tilde{\Lambda},\widehat{L^{ur}}),
\end{equation*}
where $\theta=(\theta_{\chi})_{\chi\in Irr(G)}\in K_{1}(\overline{\mathbb{Q}_{p}}[G])$ with { $\theta_{\chi}=\delta_{K}^{\chi(1)}\mathcal{N}_{K/\mathbb{Q}_{p}}(b|\chi)$.}
\end{lemma}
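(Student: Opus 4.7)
The plan is to use the commutative diagram \eqref{eq:116} to reduce the statement to an unramified computation for the representation $V(-1)=\mathbb{Q}_p(\chi^{ur})$, thereby isolating the period $t$ cleanly. Writing $\ell$ for the left vertical arrow of \eqref{eq:116} (the $\Omega$-isomorphism $e_{\chi^{ur}_{\mathbb{Q}_p},1}\mapsto e_{\chi^{ur}_{\mathbb{Q}_p},0}$) and observing that the right vertical $t\cdot\otimes f$ is multiplication by $t$ composed with the $\mathbb{Z}_p[G_{\mathbb{Q}_p}]$-isomorphism $f(v\otimes\xi)=v$, the diagram yields the factorisation $comp_V = (t\cdot\otimes f)^{-1}\circ comp_{V(-1)}\circ \ell$. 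By multiplicativity of the triple symbol, together with the fact that $\ell$ is already an $\Omega$-isomorphism and $f$ is a $\Lambda$-isomorphism, this gives
\begin{equation*}
[\mathcal{O}_L e_{\chi^{ur}_{\mathbb{Q}_p},1}, comp_V, \mathrm{Ind}_{L/\mathbb{Q}_p}T]+\partial(t) = [\mathcal{O}_L e_{\chi^{ur}_{\mathbb{Q}_p},0}, comp_{V(-1)}, \mathrm{Ind}_{L/\mathbb{Q}_p}T(-1)],
\end{equation*}
reducing us to identifying the right-hand side with $\partial(\theta)$.

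Since $V(-1)=\mathbb{Q}_p(\chi^{ur})$ is unramified, the de Rham module is trivialised as $\mathrm{D}^L_{dR}(V(-1)) = L\cdot (t^{ur}\otimes v)$ with integral lattice $\mathcal{O}_L\cdot (t^{ur}\otimes v)$, and the unit $t^{ur}\in\widehat{\mathbb{Z}^{ur}_{p}}^{\times}$ cancels in the normalisation to $K_{0}(\tilde\Lambda,\widehat{L^{ur}})$. Thus $comp_{V(-1)}$ is, up to this unit, the canonical $B_{dR}$-linear identification $B_{dR}\otimes_{\mathbb{Q}_p}L \xrightarrow{\;\cong\;} B_{dR}\otimes_{\mathbb{Q}_p}\mathrm{Ind}_{L/\mathbb{Q}_p}\mathbb{Q}_p$. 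By tame ramification $\mathcal{O}_L$ admits a normal integral basis $b$ over $\mathcal{O}_K$ (\cite[Cor.\ 1]{Fr}), so combined with the $\mathbb{Z}_p$-basis $\{a_\sigma\}_{\sigma\in\Sigma(K)}$ of $\mathcal{O}_K$ we obtain the $\mathbb{Z}_p$-basis $\{a_\sigma\cdot g(b)\}$ of $\mathcal{O}_L$; on the target, the fixed lifts $\hat\sigma\in\Sigma(L)$ of $\sigma\in\Sigma(K)$ together with coset representatives for $G_{\mathbb{Q}_p}/G_L$ give the corresponding natural basis of $\mathrm{Ind}_{L/\mathbb{Q}_p}T(-1)$.

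A direct computation, patterned on the analogous one in \cite{Breu} for the multiplicative group case, then shows that the matrix of $comp_{V(-1)}$ in these bases factors as a product whose $\chi$-determinant, for each irreducible character $\chi$ of $G$, equals $\delta_K^{\chi(1)}\mathcal{N}_{K/\mathbb{Q}_p}(b|\chi)$: the first factor arises from the embedding matrix $(\eta(a_\sigma))_{\eta,\sigma\in\Sigma(K)}$, contributing its determinant $\delta_K$ once for each of the $\chi(1)$ copies of $\rho_\chi$, while the second factor is exactly $\mathrm{Det}_\chi\bigl(\sum_{g\in G}\hat\sigma(g(b))g^{-1}\bigr)$ and thus equals $\mathcal{N}_{K/\mathbb{Q}_p}(b|\chi)$ by definition. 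Combined with the previous paragraph this proves the lemma.

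The main obstacle I expect is the careful bookkeeping of the various module structures on $\mathrm{Ind}_{L/\mathbb{Q}_p}T(-1)$ (the interplay between the $\Lambda$-, $\Omega$- and $\tilde\Omega$-actions) together with the conventions for the lifts $\hat\sigma$ and the trivialising unit $t^{ur}$; once these choices are aligned, no new homological input is needed beyond what was already used to set up \eqref{eq:116}, and the identity drops out of the multiplicativity of $\mathrm{Det}_\chi$ and the definitions of $\delta_K$ and $\mathcal{N}_{K/\mathbb{Q}_p}(b|\chi)$.
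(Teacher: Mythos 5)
Your proposal follows essentially the same route as the paper: reduce via diagram \eqref{eq:116} to the unramified twist $V(-1)$ (absorbing $\partial(t)$), use that $t^{ur}$ is a $\tilde\Lambda$-unit so it drops out in $K_0(\tilde\Lambda,\widehat{L^{ur}})$, and then evaluate the remaining comparison class as a resolvent determinant via a normal integral basis, exactly as in Breuning's Lemma~4.16. The paper makes the $t^{ur}$-cancellation explicit by inserting an auxiliary comparison with the trivial representation $V_{triv}$ (the maps $f_1,f_2$) and by passing from $B_{dR}$ to $\mathbb{C}_p$ via $\mathbb{C}_p$-admissibility and Ax--Sen--Tate, but these are bookkeeping steps underlying exactly the reduction you describe, so the argument and its ingredients coincide.
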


\begin{proof}
First we remark that the class
\begin{equation*}
[\mathcal{O}_{L}e_{\chi^{ur}_{\mathbb{Q}_{p}},1},comp_{V},\mathrm{Ind}_{L/\mathbb{Q}_{p}} T]+\partial(t)
\end{equation*}
is invariant under the action of $G_{L^{ur}}$, hence we can consider it in $K_{0}(\tilde{\Lambda},\widehat{L^{ur}})$.

The unramified representation $V(-1)$ is $\mathbb{C}_{p}$-admissible (see \cite[Prop.\ 3.56]{FO}), thus we may replace the ring $B_{dR}$ by $\mathbb{C}_{p}$ in the definition of the comparison isomorphism getting
\begin{equation*}
\begin{array}{rcl}
comp_{V(-1),L/\mathbb{Q}_{p}}:\mathbb{C}_{p}\otimes_{\mathbb{Q}_{p}}\mathrm{D}_{dR}(\mathrm{Ind}_{L/\mathbb{Q}_{p}}V(-1))& \overset{\cong}{\rightarrow} & \mathbb{C}_{p}\otimes_{\mathbb{Q}_{p}} \mathrm{Ind}_{L/\mathbb{Q}_{p}}V(-1),\\
\\
c\otimes x & \mapsto & c x
\end{array}
\end{equation*}
a $\mathbb{C}_{p}$-linear map, which commutes with the action of $G_{\mathbb{Q}_{p}}$. Taking invariants under $G(\overline{\mathbb{Q}_{p}}/L^{ur})$ on both sides and using the theorem of Ax-Sen-Tate the isomorphism above becomes
\begin{equation*}
comp_{V(-1),L/\mathbb{Q}_{p}}:\widehat{L^{ur}}\otimes_{\mathbb{Q}_{p}}Le_{\chi^{ur}_{\mathbb{Q}_{p}},0} \overset{\cong}{\rightarrow} \widehat{L^{ur}}\otimes_{\mathbb{Q}_{p}}\mathrm{Ind}_{L/\mathbb{Q}_{p}}V(-1)
\end{equation*}
and is induced (via tensor product $\mathbb{Q}_{p}\otimes_{\mathbb{Z}_{p}}$) by
\begin{equation*}
\mathcal{O}_{\widehat{L^{ur}}}\otimes_{\mathbb{Z}_{p}}\mathcal{O}_{L}e_{\chi^{ur}_{\mathbb{Q}_{p}},0}\overset{\cong}{\rightarrow} \mathcal{O}_{\widehat{L^{ur}}}\otimes_{\mathbb{Z}_{p}} \mathrm{Ind}_{L/\mathbb{Q}_{p}} T(-1).
\end{equation*}
From diagram \eqref{eq:116} we deduce that
\begin{equation} \label{eq:119}
[\mathcal{O}_{L}e_{\chi^{ur}_{\mathbb{Q}_{p}},1},comp_{V},\mathrm{Ind}_{L/\mathbb{Q}_{p}} T]+\partial(t)=[\mathcal{O}_{L}e_{\chi^{ur}_{\mathbb{Q}_{p}},0},comp_{V(-1)},\mathrm{Ind}_{L/\mathbb{Q}_{p}} T(-1)]
\end{equation}
in $K_{0}(\tilde{\Lambda},\widehat{L^{ur}})\subseteq K_{0}(\tilde{\Lambda},\mathbb{C}_{p})$, whence to prove the lemma we have to compute the last class.

Let $V_{triv}\cong \mathbb{Q}_{p}$ denote the trivial representation of $G_{K}$ and let $V(-1)=\qp v$. Fix a set $R$ of representatives of $G_{\qp}|G_K$ (consisting of the $\hat{\sigma}$ as chosen above). There is a commutative diagram of $\mathbb{C}_{p}[G]$-modules (with an action of $G_{\mathbb{Q}_{p}}$)
\begin{equation*}
\xymatrix{
\mathbb{C}_{p}\otimes_{\mathbb{Q}_{p}} \mathrm{D}^{L}_{dR}(V_{triv}) \ar[rr]^{comp_{V_{triv}}}_{\cong} \ar[d]^{f_{1}} && \mathbb{C}_{p}\otimes_{\mathbb{Q}_{p}} \mathrm{Ind}_{L/\mathbb{Q}_{p}}V_{triv} \ar[d]^{f_{2}}\\
\mathbb{C}_{p}\otimes_{\mathbb{Q}_{p}} \mathrm{D}^{L}_{dR}(V(-1)) \ar[rr]^{comp_{V(-1)}}_{\cong} && \mathbb{C}_{p}\otimes_{\mathbb{Q}_{p}} \mathrm{Ind}_{L/\mathbb{Q}_{p}}V(-1),\\
} \end{equation*}
where we use the following identifications
\begin{equation*}
\begin{array}{lcl}
\mathbb{C}_{p}\otimes_{\mathbb{Q}_{p}}\mathrm{Ind}_{L/\mathbb{Q}_{p}}V_{triv}\cong  {\color{green} \zp[G_{\qp}]\otimes_{\zp[G_L]}\mathbb{C}_{p}[G]^\sharp,} \quad &&   \quad \mathrm{D}^{L}_{dR}(V_{triv})\cong L,\\
\\
\mathbb{C}_{p}\otimes_{\mathbb{Q}_{p}}\mathrm{Ind}_{L/\mathbb{Q}_{p}}V(-1)\cong {\color{green}\zp[G_{\qp}]\otimes_{\zp[G_L]}\left(\mathbb{C}_{p}[G]^\sharp\otimes \qp v\right),} \quad &&   \quad \mathrm{D}^{L}_{dR}(V(-1))\cong Le_{\chi^{ur}_{\mathbb{Q}_{p}},0},
\end{array}
\end{equation*}
for which the maps are given by the formulas
\begin{equation*}
\begin{array}{l}
  {\color{green} comp_{V_{triv}}(q\otimes l)=  \sum_{\tau\in R} \left(\tau \otimes\sum_{g\in G }q \tau g(l)g^{-1}\right)  ,}\\
{\color{green}comp_{V(-1)}(q\otimes l e_{\chi^{ur}_{\mathbb{Q}_{p}},0})=\sum_{\tau\in R} \left(\tau \otimes\sum_{g\in G }qt^{ur} \tau g(l)g^{-1}\right),}
\\
{\color{green}f_{1}\big(q\otimes l\big)=q\otimes l e_{\chi^{ur}_{\mathbb{Q}_{p}},0},
}
\\
{\color{green}f_{2}\big(\tau\otimes w\big)=\tau\otimes (t^{ur} w \otimes v)  ,\quad \tau \in  R,\; w \in \mathbb{C}_{p}[G]^\sharp.
}
\end{array}
\end{equation*}
It follows, that
\small{
\begin{equation} \label{eq:98}
\begin{array}{l}
[\mathcal{O}_{L},comp_{V_{triv}},\mathrm{Ind}_{L/\mathbb{Q}_{p}} T_{triv}]=[\mathcal{O}_{L}e_{\chi^{ur}_{\mathbb{Q}_{p}},0},comp_{V(-1)},\mathrm{Ind}_{L/\mathbb{Q}_{p}} T(-1)]\\
\\
+[\mathcal{O}_{L},f_{1},\mathcal{O}_{L}e_{\chi^{ur}_{\mathbb{Q}_{p}},0}]+[\mathrm{Ind}_{L/\mathbb{Q}_{p}} T,f_{2}^{-1},\mathrm{Ind}_{L/\mathbb{Q}_{p}} T_{triv}]
\end{array}
\end{equation}}\normalsize
in $K_{0}(\Lambda,\mathbb{C}_{p})$. But the images of the last two classes in $K_{0}(\tilde{\Lambda},\mathbb{C}_{p})$ are zeros, as $f_{1}$ and $f_{2}$ are $\tilde{\Lambda}$-module-isomorphisms. Now we are reduced to computing $[\mathcal{O}_{L},comp_{V_{triv}},\mathrm{Ind}_{L/\mathbb{Q}_{p}} T_{triv}]$. For this we set
\begin{equation*}
H_{L}:=\underset{\eta\in \Sigma(L)}{\oplus}\mathbb{Z}_{p},
\end{equation*}
which becomes
a free $\Lambda$-module under the (left) $G$-action
\[g((a_\eta)_\eta)= (a_{\eta g})_\eta.\] We consider the following commutative diagram of $\mathbb{C}_{p}[G]$-modules (with an action of $G_{\mathbb{Q}_{p}}$)
\begin{equation} \label{comp-rho}
\xymatrix{
\mathbb{C}_{p}\otimes_{\mathbb{Q}_{p}} \mathrm{D}^{L}_{dR}(V_{triv}) \ar[rr]^{comp_{V_{triv}}}_{\cong} \ar[rrd]^{\rho_{L}}_{\cong} && \mathbb{C}_{p}\otimes_{\mathbb{Q}_{p}} \mathrm{Ind}_{L/\mathbb{Q}_{p}}V_{triv} \ar[d]^{\varphi_{1}} \\
&& \mathbb{C}_{p}\otimes_{\mathbb{Z}_{p}} H_{L},
}\end{equation}
where the maps $\varphi_{1}$ and $\rho_{L}$ are given by the formulas {\color{green}
\begin{equation*}
\begin{array}{l}
\rho_{L}(q\otimes l)  =  \big(q  \eta(l) \otimes 1\big)_{\eta\in \Sigma(L)}, \quad q\in  \mathbb{C}_{p},\; l\in L;\\
\varphi_{1} \left(\sum_{\tau\in R}\big(\tau\otimes\underset{g\in G }{\sum}a_{g}^\tau g\big)\right)   = \Big(   a_{g^{-1}}^\tau\otimes 1 \Big)_{\tau g=\eta\in \Sigma(L)} \quad \text{for each } a_{g}\in\mathbb{C}_p, \tau\in  R.
\end{array}
\end{equation*}}
%and $\Sigma(\sigma):=\left\{\eta\in\Sigma(L): \eta|_{K}=\sigma\right\}$.
From the diagram \eqref{comp-rho} we deduce the equality
\begin{equation*}
[\mathcal{O}_{L},comp_{V_{triv}},\mathrm{Ind}_{L/\mathbb{Q}_{p}} T_{triv}]+[\mathrm{Ind}_{L/\mathbb{Q}_{p}} T_{triv},\varphi_{1},H_{L}]=[\mathcal{O}_{L},\rho_{L},H_{L}]
\end{equation*}
in $K_{0}(\Lambda,\mathbb{C}_{p})$. Further, \cite[Lem.\ 4.16]{Breu} says that the last class is equal to $\partial(\theta)$,  so that we have
\begin{equation}\label{eq:79}
[\mathcal{O}_{L},comp_{V_{triv}},\mathrm{Ind}_{L/\mathbb{Q}_{p}} T_{triv}]=\partial(\theta).
\end{equation}
{  because the class $[\mathrm{Ind}_{L/\mathbb{Q}_{p}} T_{triv},\varphi_{1},H_{L}]$ is obviously zero.}
\end{proof}

{
\begin{lemma} \label{le:80}
Let $L/K$ be (at most) tamely ramified. Then there exists $v\in \Lambda^{\times}$, such that $\mathrm{Det}_{\chi}(v)=\chi^{ur}_{\mathbb{Q}_{p}}\big(N_{K/\mathbb{Q}_{p}}(\frak{f}(\chi))\cdot d_{K/\mathbb{Q}_{p}}^{\chi(1)}\big)$ for all $\chi\in Irr(G)$, whence
\begin{equation*}
\partial(\epsilon_{D}(L/K,V)) = \partial( \left(\epsilon( \mathrm{Ind}_{K/\mathbb{Q}_{p}}(  \rho^{}_{\chi}),-\psi_\xi)^{-1} \right)_{\chi\in Irr(G)} ).
%\partial(\iota(\epsilon_{L}(L/K,V(-1))))=\partial(\iota(\epsilon_{L}(L/K,V_{triv})))
\end{equation*}
by Lemma \ref{lemma}  and \eqref{eq:118}.
%in $K_{0}(\tilde{\Lambda}, \mathbb{C}_{p})$.
\end{lemma}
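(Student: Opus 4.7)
The plan is to build $v$ by hand using the central inertia idempotent, which is available precisely in the tame case, and then derive the displayed $\partial$-equality automatically. By \eqref{eq:118} the ideal $N_{K/\mathbb{Q}_{p}}(\frak{f}(\chi))\cdot d_{K/\mathbb{Q}_{p}}^{\chi(1)}$ in $\mathbb{Z}_{p}$ equals $\frak{f}(\mathrm{Ind}_{K/\mathbb{Q}_{p}}\rho^{*}_{\chi})$, so writing $u:=\chi^{ur}_{\mathbb{Q}_{p}}(p)\in\mathbb{Z}_{p}^{\times}$, $f_{K}$ for the residue degree of $K/\mathbb{Q}_{p}$, and $d:=v_{p}(d_{K/\mathbb{Q}_{p}})$, the fact that the unramified character $\chi^{ur}_{\mathbb{Q}_{p}}$ is trivial on $\mathbb{Z}_{p}^{\times}$ gives
\begin{equation*}
\chi^{ur}_{\mathbb{Q}_{p}}\bigl(N_{K/\mathbb{Q}_{p}}(\frak{f}(\chi))\cdot d_{K/\mathbb{Q}_{p}}^{\chi(1)}\bigr)=u^{f_{K}\cdot a(\chi)+d\cdot\chi(1)}.
\end{equation*}
The problem thus reduces to finding a single element of $\Lambda^{\times}$ whose $\chi$-determinants realise these particular powers of $u$.

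Tameness enters through two features. First, $|I|$ is prime to $p$, so $e_{I}=|I|^{-1}\sum_{i\in I}i$ lies in $\Lambda$, and since $I\triangleleft G$ it is a central idempotent. Second, for any $\chi\in\mathrm{Irr}(G)$ the space $V_{\chi}^{I}$ is $G$-stable and therefore equals $0$ or $V_{\chi}$; combined with the vanishing of all higher ramification groups this forces $a(\chi)=\chi(1)-\dim V_{\chi}^{I}$ to be either $\chi(1)$ (when $\rho_{\chi}(e_{I})=0$) or $0$ (when $\rho_{\chi}(e_{I})=\mathrm{id}$). This dichotomy suggests setting
\begin{equation*}
v:=u^{d}\cdot\bigl(u^{f_{K}}(1-e_{I})+e_{I}\bigr)\in\Lambda.
\end{equation*}
Each factor is a unit: $u^{d}\in\mathbb{Z}_{p}^{\times}$, and under the central splitting $\Lambda=e_{I}\Lambda\times(1-e_{I})\Lambda$ the bracketed factor becomes $(1,u^{f_{K}})$. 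A direct case analysis then gives $\mathrm{Det}_{\chi}(v)=u^{d\chi(1)}$ when $\chi$ is trivial on $I$ and $\mathrm{Det}_{\chi}(v)=u^{(f_{K}+d)\chi(1)}$ when it is not, which in both cases matches the exponent $f_{K}a(\chi)+d\chi(1)$ required above.

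For the ``whence'' part, combining Lemma \ref{lemma} with \eqref{eq:118} produces
\begin{equation*}
\epsilon_{D}(L/K,V)=\bigl(\epsilon(\mathrm{Ind}_{K/\mathbb{Q}_{p}}\rho_{\chi},-\psi_{\xi})^{-1}\bigr)_{\chi}\cdot\bigl(\mathrm{Det}_{\chi}(v)\bigr)_{\chi}\quad\text{in }K_{1}(\overline{\mathbb{Q}_{p}}[G]).
\end{equation*}
The second factor is the image of $v\in\Lambda^{\times}\subset\tilde{\Lambda}^{\times}$ under $K_{1}(\tilde{\Lambda})\to K_{1}(\tilde{\Omega})$ and is therefore killed by $\partial$, by the localisation sequence recalled before the conjecture $C^{na}_{ep}(L/K,V)$. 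Applying $\partial$ to the displayed identity then yields the claim. The main obstacle is not conceptual but book-keeping: one has to ensure that the character $\chi^{ur}$ appearing in Lemma \ref{lemma} and the character on ideals denoted $\chi^{ur}_{\mathbb{Q}_{p}}$ are genuinely the same under the geometric-Frobenius normalisation fixed in the paper, so that no spurious sign or inverse creeps in when identifying $\chi^{ur}(\frak{f}(\mathrm{Ind}_{K/\mathbb{Q}_{p}}\rho_{\chi}))$ with $\mathrm{Det}_{\chi}(v)$.
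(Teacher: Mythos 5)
Your construction is correct, and the element you build is in fact the same unit as the paper's: one checks that the paper's $v'\cdot u^{m}=\big(\tfrac{u}{u e_{I}+(1_{G}-e_{I})}\big)^{f}\cdot u^{m}$ simplifies, using centrality of $e_{I}$, to $u^{m}\big(e_{I}+u^{f}(1-e_{I})\big)$, which is exactly your $v$. The genuine (minor) difference is in how $\mathrm{Det}_{\chi}(v)$ is evaluated: you observe that $V_{\chi}^{I}$ is $G$-stable because $I$ is normal, hence by irreducibility $\rho_{\chi}(e_{I})$ is either $0$ or $\mathrm{id}$, giving a clean two-case computation. The paper instead exhibits $\rho_{\chi}(e_{I})$ in a triangular/projection form and shows $\det\big(u\,\rho_{\chi}(e_{I})+\rho_{\chi}(1_{G})-\rho_{\chi}(e_{I})\big)=u^{\operatorname{rank}\rho_{\chi}(e_{I})}$ without invoking the dichotomy; that route is slightly more robust (it would work even if $e_{I}$ were merely idempotent and not central), but in the present setting both are valid and yield the same $\mathrm{Det}_{\chi}(v)=u^{f_{K}a(\chi)+d\,\chi(1)}$. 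Your ``whence'' step — pulling $(\mathrm{Det}_{\chi}(v))_{\chi}$ through $K_{1}(\tilde\Lambda)\to K_{1}(\tilde\Omega)$ and using exactness of the localisation sequence to kill it under $\partial$ — matches the paper's intended argument. The only point deserving a little more care (and you flag it yourself) is that $\chi^{ur}$ evaluated on Frobenius in Lemma~\ref{lemma} and $\chi^{ur}_{\mathbb{Q}_{p}}$ evaluated on $p$ via reciprocity agree under the paper's geometric-Frobenius normalisation; granting that, the proof is complete.
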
}

\begin{proof}
The character $\chi^{ur}_{\mathbb{Q}_{p}}:\mathbb{Q}_{p}^{\times}\rightarrow \mathbb{Z}_{p}^{\times}$ being a homomorphism we have
\begin{equation*}
\chi^{ur}_{\mathbb{Q}_{p}}\big(N_{K/\mathbb{Q}_{p}}(f(\chi))\cdot d_{K/\mathbb{Q}_{p}}^{\chi(1)}\big)=\chi^{ur}_{\mathbb{Q}_{p}}(N_{K/\mathbb{Q}_{p}}(\frak{f}(\chi)))\cdot \chi^{ur}_{\mathbb{Q}_{p}}(d_{K/\mathbb{Q}_{p}}^{\chi(1)}).
\end{equation*}
Let $\chi^{ur}_{\mathbb{Q}_{p}}(p)=:u\in \mathbb{Z}_{p}^{\times}$ and let $d_{K/\mathbb{Q}_{p}}=p^{m}$. Then for $u^{m}\in\mathbb{Z}_{p}^{\times}\subset \Lambda^{\times}$, $\chi\in Irr(G)$
\begin{equation*}
\chi^{ur}_{\mathbb{Q}_{p}}(d_{K/\mathbb{Q}_{p}}^{\chi(1)})=\chi^{ur}_{\mathbb{Q}_{p}}(p)^{m\cdot \chi(1)}=u^{m\cdot \chi(1)}=\det(\rho_{\chi}(u^{m}1_{G}))=\mathrm{Det}_{\chi}(u^{m}).
\end{equation*}
Recall $q_{K}=p^{f}$ and $e_{I}=\frac{1}{|I|}\sum_{i\in I}i \in \Lambda$, as $(|I|,p)=1$. Let $v':=\big(\frac{u}{u\cdot e_{I}+(1_{G}-e_{I})}\big)^{f}$. Then for $\chi\in Irr(G)$
\begin{equation*}
\begin{array}{rcl}
\mathrm{Det}_{\chi}(v') & = & \mathrm{Det}_{\chi}(u)^{f}\cdot \mathrm{Det}_{\chi}(u\cdot e_{I}+(1_{G}-e_{I}))^{-f}\\
\\
& = & u^{f\cdot \chi(1)}\cdot \det(u\cdot\rho_{\chi}(e_{I})+\rho_{\chi}(1_{G})-\rho_{\chi}(e_{I}))^{-f}.
\end{array}
\end{equation*}
There exists a basis of $V_{\rho}=V_{\rho_{\chi}}$, such that
\begin{equation*}
\begin{array}{rcl}
\rho_{\chi}(e_{I})& = & \begin{pmatrix}
1 & * & \ldots & * \\ 0 & 1 & * & \ldots & * \\ & & \ddots \\ 0 &  \ldots & 0 & 1 & * & \ldots & *\\ 0 & 0 & 0 & \ldots  & 0 & *\\ & & & & & \ddots\\ 0 & 0 & 0 & 0 & 0 & \ldots  & 0 \\
\end{pmatrix} \\
\\
\text{and} \\
\\
\rho_{\chi}(1_{G})-\rho_{\chi}(e_{I})& = & \begin{pmatrix}
0 & * & \ldots & * \\ 0 & 0 & * & \ldots & * \\ & & \ddots \\ 0 &  \ldots & 0 & 0 & * & \ldots & *\\ 0 & 0  & \ldots & 0 & 1 & * \\ & & & & & \ddots\\ 0 & 0 & 0 & 0  & \ldots & 0 & 1 \\
\end{pmatrix}, \\
\end{array}
\end{equation*}
as $e_{I}+1_{G}-e_{I}=1_{G}$. It follows, that $\det(u\cdot\rho_{\chi}(e_{I})+\rho_{\chi}(1_{G})-\rho_{\chi}(e_{I}))=u^{\mathrm{rank}(\rho_{\chi}(e_{I}))}$, hence
\begin{equation*}
\mathrm{Det}_{\chi}(v')=u^{f\cdot(\chi(1)-\mathrm{rank}(\rho_{\chi}(e_{I})))}.
\end{equation*}
But
\begin{equation*}
\chi(1)=\dim V_{\rho}, \quad \mathrm{rank}(\rho_{\chi}(e_{I}))=\dim \mathrm{Im}(\rho_{\chi}(e_{I}))=\dim V^{I}_{\rho},
\end{equation*}
so that $\chi(1)-\mathrm{rank}(\rho_{\chi}(e_{I}))=\mathrm{codim}\; V^{I}_{\rho}$. Further, since $\chi$ is a tamely ramified character, the Artin conductor $\frak{f}(\chi)$ is equal to $\mathfrak{p}_{K}^{\mathrm{codim}\; V^{I}_{\rho}}$ (see \cite[p. 22]{Ma}), whence
\begin{equation*}
\mathrm{Det}_{\chi}(v')=\chi^{ur}_{\mathbb{Q}_{p}}(p^{f\cdot \mathrm{codim}\; V^{I}_{\rho}})=\chi^{ur}_{\mathbb{Q}_{p}}(N_{K/\mathbb{Q}_{p}}(\mathfrak{p}_{K}^{\mathrm{codim}\; V^{I}_{\rho}}))=\chi^{ur}_{\mathbb{Q}_{p}}(N_{K/\mathbb{Q}_{p}}(\frak{f}(\chi))).
\end{equation*}
Now we set $v:=v'\cdot u$. It remains to prove $v\in \Lambda^{\times}$ and for this it is enough to show that $u\cdot e_{I}+(1-e_{I})\in \Lambda^{\times}$. But
\begin{equation*}
(u\cdot e_{I}+(1_{G}-e_{I}))\cdot(e_{I}+u\cdot(1_{G}-e_{I}))=u 1_{G}\in  \Lambda^{\times}.
\end{equation*}
\end{proof}

{
\begin{lemma}\label{le:81}
\begin{enumerate}
\item $\partial([\mathrm{D}^{L}_{cris}(V),1-\phi])= -\partial({^\sharp}(q_Ke_I))$
\item $\partial([\mathrm{D}^{L}_{cris}(V^{*}(1)),1-\phi])= [{\mathbb{Z}_p[G/I]}, {1-\chi^{ur}(Fr_K)^{-1}Fr_K} , {\mathbb{Z}_p[G/I]}]  $
\end{enumerate}
\end{lemma}

\begin{proof}
 (1) Let   $L^{0}$ and $K^0$ be the maximal unramified extension of $\mathbb{Q}_{p}$ contained in $L$ and $K$ respectively, $[K^{0}:\mathbb{Q}_{p}]=f_K$, $[L^{0}:K^{0}]=f_{L/K}$. Denote by $Fr_{L}$ and $Fr_{K}$ the arithmetic Frobenius of $L$ and $K$, respectively, while $\tau=Fr_{\mathbb{Q}_p}^{-1}$ denotes the absolute geometric Frobenius. Then $Fr_{K}=Fr_{\mathbb{Q}_{p}}^{f_K}$. After choosing a normal basis of $\mathcal{O}_{L^0}$ over $\mathbb{Z}_p$ we have
\begin{equation*}
\mathrm{D}^{L}_{cris}(V)=L^{0}e_{\chi^{ur}_{\mathbb{Q}_{p}},1}\cong \bigoplus_{i=0}^{f_K-1}\mathbb{Q}_p[G/I]\tau^i \;\text{ with }\; \phi(e_{\chi^{ur}_{\mathbb{Q}_{p}},1})=p^{-1}\chi^{ur}_{\mathbb{Q}_{p}}(Fr_{\mathbb{Q}_{p}}^{-1})e_{\chi^{ur}_{\mathbb{Q}_{p}},1},
\end{equation*}
i.e., by the semi-linearity of $\phi$ on the last $\mathbb{Q}_p[G/I]$-module the operator $1-\phi$ is represented by the matrix
\[\begin{pmatrix}
1 & 0 & \ldots & 0 & -AFr_K^{-1} \\
-A & 1 & 0 & \ldots & 0 \\
0 &  \ddots & \ddots & \ddots&\vdots \\
   \vdots & \ddots & -A & 1 &0\\
0 & 0 & 0 & -A & 1   \\
\end{pmatrix}\]
with respect to the basis $1,\tau,\ldots,\tau^{f_K-1}$, where $A= \frac{\chi^{ur}(Fr_{\mathbb{Q}_{p}}^{-1})}{p}$.
Thus

\begin{equation*}
\begin{array}{rcl}
\partial[\mathrm{D}^{L}_{cris}(V),1-\phi]) & = & \partial\left(^\sharp((1-p^{-f_K}\chi^{ur}(Fr_K^{-1})Fr_K^{-1})e_I)\right)\\
\\
& = & \partial\Big({^\sharp\left(\frac{1}{p^{f}}(p^{f_K}-\chi^{ur}(Fr_{K}^{-1})Fr_K^{-1})e_I\right)}\Big)\\
\\
& = & -\partial(^{\sharp}(q_{K}e_{I})),
\end{array}
\end{equation*}
since $p^{f_K}-\chi^{ur}(Fr_{K}^{-1})Fr_K^{-1}\in \mathbb{Z}_{p}[G/I]^{\times}$ as can be seen by a standard argument using the geometric series.\\
(2) Analogously,
\begin{eqnarray*}
\partial([\mathrm{D}^{L}_{cris}(V^{*}(1)),1-\phi])&=&\partial\left(^\sharp((1- \chi^{ur}(Fr_K )Fr_K^{-1})e_I)\right)\\
&=&[{\mathbb{Z}_p[G/I]}, {1-\chi^{ur}(Fr_K)^{-1}Fr_K} , {\mathbb{Z}_p[G/I]}]\\& &\phantom{mmmmm}+[{\mathbb{Z}_p[G/I]},-\chi^{ur}(Fr_K)^{-1}Fr_K,{\mathbb{Z}_p[G/I]}],
\end{eqnarray*}
but the latter class is zero being a $\mathbb{Z}_p[G]$-module-isomorphism.
\end{proof}

\begin{theorem} \label{th:100}
Let $L/K$ be a Galois extension of $p$-adic fields which is (at most) tamely ramified and let $V=\mathbb{Q}_{p}(\chi^{ur})(1)$. Then $C^{na}_{ep}(L/K,V)$ is equivalent to the vanishing of
\small{
\begin{equation} \label{eq:112}
 \partial(\theta)  +\partial( (\epsilon_{D}(L/K,V )))
\end{equation}} \normalsize
in $K_{0}(\tilde{\Lambda}, \widehat{\mathbb{Q}^{ur}_{p}})$.
\end{theorem}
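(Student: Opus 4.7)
The plan is essentially to assemble all the computations made in the preceding sections and observe that a small number of intermediate terms cancel. I do not expect any new conceptual input to be required; the real work has already been done in Lemmas \ref{le:79} and \ref{le:81} and in equations \eqref{log:eq:2}, \eqref{eq:83}, \eqref{log:eq:6}.

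First I would rewrite $C_{L/K}$ using \eqref{log:eq:2}, \eqref{eq:83} and \eqref{log:eq:6} to obtain
\begin{equation*}
C_{L/K}=\partial\bigl({}^{\sharp}(q_K e_I)\bigr) + [\mathbb{Z}_p[G/I],\,1-\chi^{ur}(Fr_K)^{-1}Fr_K,\,\mathbb{Z}_p[G/I]] + [\mathcal{O}_L e_{\chi^{ur}_{\mathbb{Q}_p},1},\,comp_V,\,\mathrm{Ind}_{L/\mathbb{Q}_p}T].
\end{equation*}
Lemma \ref{le:79} then lets me replace $[\mathcal{O}_L e_{\chi^{ur}_{\mathbb{Q}_p},1},comp_V,\mathrm{Ind}_{L/\mathbb{Q}_p}T]+\partial(t)$ by $\partial(\theta)$, so that
\begin{equation*}
C_{L/K}+\partial(t)=\partial\bigl({}^{\sharp}(q_K e_I)\bigr) + [\mathbb{Z}_p[G/I],\,1-\chi^{ur}(Fr_K)^{-1}Fr_K,\,\mathbb{Z}_p[G/I]] + \partial(\theta).
\end{equation*}

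Next, using the formula $U_{cris}=\partial([\mathrm{D}^L_{cris}(V),1-\phi])-\partial([\mathrm{D}^L_{cris}(V^*(1)),1-\phi])$ together with the two parts of Lemma \ref{le:81}, I get
\begin{equation*}
U_{cris}=-\partial\bigl({}^{\sharp}(q_K e_I)\bigr)-[\mathbb{Z}_p[G/I],\,1-\chi^{ur}(Fr_K)^{-1}Fr_K,\,\mathbb{Z}_p[G/I]].
\end{equation*}
Adding this to the previous display the two terms $\partial({}^{\sharp}(q_K e_I))$ and the class representing $\mathrm{H}^2(L,T)$ cancel, leaving
\begin{equation*}
C_{L/K}+U_{cris}+\partial(t)+\partial(\epsilon_D(L/K,V))=\partial(\theta)+\partial(\epsilon_D(L/K,V)).
\end{equation*}
Since $C^{na}_{ep}(L/K,V)$ is precisely the vanishing of the left-hand side in $K_0(\tilde\Lambda,\widehat{\mathbb{Q}^{ur}_p})$, the asserted equivalence follows immediately.

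Since no genuinely new calculation is involved, the ``main obstacle'' here is purely bookkeeping: ensuring that every class lands in the correct relative $K_0$-group (in particular that all terms are actually Galois-invariant and lie in $K_0(\tilde\Lambda,\widehat{\mathbb{Q}^{ur}_p})$ rather than merely $K_0(\tilde\Lambda,B_{dR})$ or $K_0(\tilde\Lambda,\mathbb{C}_p)$). This was already ensured at each intermediate step, so the synthesis proceeds by direct substitution. Note that Lemma \ref{le:80} is not needed for the equivalence itself, but provides the more explicit reformulation of $\partial(\epsilon_D(L/K,V))$ that will be exploited in the subsequent verification of \eqref{eq:112}.
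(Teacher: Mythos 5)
Your proposal is correct and follows essentially the same route as the paper: the paper's proof is just the citation list \eqref{eq:82}, \eqref{log:eq:2}, \eqref{eq:83}, \eqref{log:eq:6}, Lemma~\ref{le:79} and Lemma~\ref{le:81}, and your chain of substitutions is precisely the intended assembly of those ingredients. Your remark that Lemma~\ref{le:80} is not needed for the equivalence itself (only for the subsequent reduction to \eqref{reduction1}) is accurate, even though the paper's one-line proof lists it among the references.
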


\begin{proof}
The proof is given by \eqref{eq:82},  \eqref{eq:118},   \eqref{log:eq:2}, \eqref{eq:83}, \eqref{log:eq:6}, Lemmata \ref{le:79},\ref{le:80}  and  \ref{le:81}.
\end{proof}

%Now to prove the conjecture it is enough to compute $[\mathrm{H}^{1}_{f}(L,T^{*}(1))^{*},exp^{*}_{V^{*}(1)},0]$ and $U_{cris}$. We do the computations in the absolute case.
%By Corollary \ref{cor:120}
%\begin{equation} \label{eq:121}
%[\mathrm{H}^{1}_{f}(K,T^{*}(1))^{*},exp^{*}_{V^{*}(1)},0]=[0,id,\mathrm{H}^{0}(K,V^{*}(1)/T^{*}(1))].
%\end{equation}
%Taking the Pontryagin dual of the exact sequence
%\small{
%\begin{equation*}
%\xymatrix{
%0 \ar[r] & \mathrm{H}^{0}(K,V^{*}(1)/T^{*}(1)) \ar[r]^{\;\;\;1-Fr_{K}} & \mathbb{Q}_{p}/\mathbb{Z}_{p}\big((\chi^{ur})^{-1}\big) \ar[r] &  \mathbb{Q}_{p}/\mathbb{Z}_{p}\big((\chi^{ur})^{-1}\big) \ar[r] & 0
%}\end{equation*}}\normalsize
%we obtain an exact sequence of $\mathbb{Z}_{p}$-modules
%\begin{equation*}
%\xymatrix{
%0 \ar[r] & \mathbb{Z}_{p}(\chi^{ur})  \ar[r]^{1-Fr_{K}} & \mathbb{Z}_{p}(\chi^{ur}) \ar[r] &  \mathrm{H}^{0}(K,V^{*}(1)/T^{*}(1))^{\vee} \ar[r] & 0,
%}\end{equation*}
%which shows that the last class in \eqref{eq:121} is equal to $\partial(1-\chi^{ur}(Fr_{K}))$.

%{\color{red} Der Term $\partial({^\sharp}(-\chi^{ur}(Fr_K)^{-1}Fr_Ke_I))$ ist neu (ist er im zahmen Fall stets trivial?) und mu{\ss} unten noch eingearbeitet werden, er ist eng mit der non-ramified characteristic of Fr\"{o}hlich $y(\chi)$ verwand - von dem Vorfaktor $\chi^{ur}(Fr_K)^{-1}$ abgesehen! Vgl. auch Lemma \ref{le:80}.}

Now by Lemmata \ref{le:80} and \ref{le:79} the equation $\eqref{eq:112}$ means that there exists $w\in K_1(\tilde{\Lambda})$ such that
\begin{equation}\label{reduction1}
\mathrm{Det}_\chi(w)= \frac{\delta_K^{\chi(1)}\mathcal{N}_{K/\mathbb{Q}_{p}}(b|\chi)}{\epsilon( \mathrm{Ind}_{K/\mathbb{Q}_{p}}(  \rho^{}_{\chi}),{\color{red}-\psi_{\xi}}) }
%\mathrm{Det}_\chi(w)=\frac{\delta_K^{\chi(1)}
\end{equation}
for all $\chi\in \mathrm{Irr}_{\bar{\mathbb{Q}_p}}(G)$.

Let $\tau':=\tau_{\mathbb{Q}_{p}}(\mathrm{Ind}_{K/\mathbb{Q}_{p}}1_{K})$, where $1_{K}$ is the  trivial character of $G $.

\begin{lemma} There exists a unit $u\in \mathbb{Z}_p^\times$ and $\sigma'\in G$  such that
\label{lemma1} \[\epsilon( \mathrm{Ind}_{K/\mathbb{Q}_{p}}(  \rho^{}_{\chi}),{\color{red}-}\psi_{\xi},dx)=(u\tau' d_{K/\mathbb{Q}_p}^{-1})^{\chi(1)}\mathrm{Det}_\chi(\sigma')\epsilon(    \rho^{}_{\chi} , \psi_K,dx) \] for all $\chi$.
\end{lemma}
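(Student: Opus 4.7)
My plan rests on three standard properties of local $\epsilon$-constants:
(a) the transformation rule $\epsilon(V,-\psi,dx) = \det(V)(-1)\cdot \epsilon(V,\psi,dx)$ under the sign-change of the additive character (valid since $-1\in\mathbb{Q}_p^\times$ is a unit), where $\det(V)(-1)$ is evaluated via local reciprocity;
(b) the inductivity of Galois Gauss sums
\[
\tau_{\mathbb{Q}_p}(\mathrm{Ind}_{K/\mathbb{Q}_p}\rho) = \tau_K(\rho)\cdot \tau_{\mathbb{Q}_p}(\mathrm{Ind}(1_K))^{\dim\rho};
\]
and (c) the determinantal formula for induced representations
\[
\det(\mathrm{Ind}_{K/\mathbb{Q}_p}\rho_\chi) = \epsilon_{K/\mathbb{Q}_p}^{\chi(1)}\cdot (\det\rho_\chi\circ \mathrm{ver}_{K/\mathbb{Q}_p}),
\]
with $\epsilon_{K/\mathbb{Q}_p}=\det(\mathrm{Ind}(1_K))$ and $\mathrm{ver}$ the transfer (Verlagerung).

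First I would apply (a) to $V=\mathrm{Ind}(\rho_\chi)$ and then use (c), evaluated at $-1\in\mathbb{Q}_p^\times$, to split the prefactor and obtain
\[
\epsilon(\mathrm{Ind}(\rho_\chi),-\psi_\xi,dx) = \epsilon_{K/\mathbb{Q}_p}(-1)^{\chi(1)}\cdot \mathrm{Det}_\chi(\sigma')\cdot \epsilon(\mathrm{Ind}(\rho_\chi),\psi_\xi,dx),
\]
where $\sigma'\in G$ is chosen as any lift of the image in $G^{\mathrm{ab}}$ of $\mathrm{ver}_{K/\mathbb{Q}_p}(-1)\in G_K^{\mathrm{ab}}$. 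Crucially, $\sigma'$ is independent of $\chi$, as required by the statement; and the sign $\epsilon_{K/\mathbb{Q}_p}(-1)\in\{\pm 1\}\subset\mathbb{Z}_p^\times$ will be absorbed into the final unit $u$.

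It remains to identify the remaining factor $\epsilon(\mathrm{Ind}(\rho_\chi),\psi_\xi,dx)$ with $(\tau' d_{K/\mathbb{Q}_p}^{-1})^{\chi(1)}\cdot\epsilon(\rho_\chi,\psi_K,dx)$. The unramified-twist formula $\epsilon(V\otimes|\cdot|^{1/2},\psi_\xi,dx)=p^{-a(V)/2}\epsilon(V,\psi_\xi,dx)$ shows that over $\mathbb{Q}_p$ the factors $p^{-a(V)/2}$ and $\sqrt{f(V)}$ in the definition of $\tau_{\mathbb{Q}_p}(V)$ cancel exactly, yielding $\tau_{\mathbb{Q}_p}(V)=\epsilon(V,\psi_\xi,dx)$ for every representation $V$ of $G_{\mathbb{Q}_p}$. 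The analogous computation on $K$ requires the measure conversion $dx_{\psi_K}=d_{K/\mathbb{Q}_p}^{-1/2}\cdot dx$ (which follows from $n(\psi_K)=v_K(\mathfrak{d}_{K/\mathbb{Q}_p})$ and $q_K^{v_K(\mathfrak{d}_{K/\mathbb{Q}_p})}=d_{K/\mathbb{Q}_p}$): this contributes a factor $d_{K/\mathbb{Q}_p}^{-\chi(1)/2}$ when passing from $dx$ to $dx_{\psi_K}$ in $\epsilon(\rho_\chi,\psi_K,\cdot)$, to which the twist by $|\cdot|_K^{1/2}$ adds a further $d_{K/\mathbb{Q}_p}^{-\chi(1)/2}$, while $\sqrt{f(\rho_\chi)}$ cancels with the conductor term. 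The net result is $\tau_K(\rho_\chi)=d_{K/\mathbb{Q}_p}^{-\chi(1)}\epsilon(\rho_\chi,\psi_K,dx)$. Plugging into (b) gives
\[
\epsilon(\mathrm{Ind}(\rho_\chi),\psi_\xi,dx) = \tau_{\mathbb{Q}_p}(\mathrm{Ind}(\rho_\chi)) = \tau_K(\rho_\chi)\cdot (\tau')^{\chi(1)} = (\tau' d_{K/\mathbb{Q}_p}^{-1})^{\chi(1)}\epsilon(\rho_\chi,\psi_K,dx).
\]

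Combining with the previous step gives the claimed identity with $u=\epsilon_{K/\mathbb{Q}_p}(-1)\in\mathbb{Z}_p^\times$ and $\sigma'$ as described. The main technical point is the third paragraph: the conversion between $\tau_K(\rho_\chi)$ and $\epsilon(\rho_\chi,\psi_K,dx)$, where the measure-conversion and the twist by $|\cdot|_K^{1/2}$ must conspire to produce exactly the factor $d_{K/\mathbb{Q}_p}^{-\chi(1)}$. Once this bookkeeping is in place, the remaining steps are formal applications of inductivity of Galois Gauss sums and the standard determinant-of-induction identity.
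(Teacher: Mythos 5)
Your approach is in the same circle of ideas as the paper's (inductivity combined with the behaviour of $\epsilon$-constants under rescaling the additive character), but you organize it differently: you factor out the change of additive character first via the scalar transformation rule and the determinant-of-induction formula, and then invoke the inductivity identity for Galois Gauss sums; the paper instead first applies degree-$0$ inductivity of $\epsilon$-factors and then appeals directly to Tate's formulas (3.4.4) and (3.4.5) from \cite{Ta1}. Your reorganization is attractive because $\tau_{\mathbb{Q}_p}$ and $\tau_K$ are $\psi$-independent, so all the $\psi$-dependence is concentrated in one determinant factor at the start.

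However, there is a genuine gap in your third paragraph. You assert that $\tau_{\mathbb{Q}_p}(V)=\epsilon(V,\psi_\xi,dx)$ for every $G_{\mathbb{Q}_p}$-representation $V$. The Galois Gauss sum $\tau_{\mathbb{Q}_p}$ is defined with respect to the \emph{standard} additive character $\psi_{\mathbb{Q}_p}$, not $\psi_\xi$; your measure-conversion argument is correct but yields $\tau_{\mathbb{Q}_p}(V)=\epsilon(V,\psi_{\mathbb{Q}_p},dx)$. These two characters both have conductor $0$ but differ by a unit scalar in $\mathbb{Z}_p^\times$ (which depends on the chosen compatible system $\xi$), and in general $\psi_\xi\neq\psi_{\mathbb{Q}_p}$. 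This is exactly why the paper's proof introduces $\sigma\in G(\overline{\mathbb{Q}}_p/\mathbb{Q}_p^{nr})$ with $\kappa(\sigma)\psi_{\mathbb{Q}_p}=-\psi_\xi$: one must track this unit. As written, your first step only carries you from $-\psi_\xi$ to $\psi_\xi$, not to $\psi_{\mathbb{Q}_p}$, so the final unit $u=\epsilon_{K/\mathbb{Q}_p}(-1)$ and the element $\sigma'$ you obtain are not the correct ones.

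The repair stays entirely inside your framework. Write $-\psi_\xi=c'\psi_{\mathbb{Q}_p}$ for the unique $c'\in\mathbb{Z}_p^\times$ (that is, $c'=\kappa(\sigma)$ in the paper's notation) and apply your transformation rule at $a=c'$ rather than at $a=-1$:
\[
\epsilon(\mathrm{Ind}_{K/\mathbb{Q}_p}\rho_\chi,-\psi_\xi,dx)=\det\!\big(\mathrm{Ind}_{K/\mathbb{Q}_p}\rho_\chi\big)(c')\cdot\epsilon(\mathrm{Ind}_{K/\mathbb{Q}_p}\rho_\chi,\psi_{\mathbb{Q}_p},dx).
\]
Decomposing the determinant factor with (c) at $c'$ gives $\epsilon_{K/\mathbb{Q}_p}(c')^{\chi(1)}\mathrm{Det}_\chi(\sigma')$ where now $\sigma'\in G$ lifts the image in $G^{\mathrm{ab}}$ of $\mathrm{ver}_{K/\mathbb{Q}_p}(c')$. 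Everything else in your argument (the identity $\tau_K(\rho_\chi)=d_{K/\mathbb{Q}_p}^{-\chi(1)}\epsilon(\rho_\chi,\psi_K,dx)$, the inductivity of Gauss sums, and the cancellation over $\mathbb{Q}_p$) is correct, and the final unit becomes $u=\epsilon_{K/\mathbb{Q}_p}(c')\in\{\pm 1\}\subset\mathbb{Z}_p^\times$.
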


\begin{proof}
As Gauss sums are additive and behave inductive on degree $0$ characters we have
\[\frac{\epsilon( \mathrm{Ind}_{K/\mathbb{Q}_{p}}(  \rho^{}_{\chi}),{\color{red}-}\psi_{\xi},dx)}{\epsilon( \mathrm{Ind}_{K/\mathbb{Q}_{p}}( 1_K),{\color{red}-}\psi_{\xi},dx)^{\chi(1)}}= \frac{\epsilon(    \rho^{}_{\chi} ,{\color{red}-}\psi_{\xi}\circ Tr_{K/\mathbb{Q}_p},dx)}{\epsilon(   1_K,{\color{red}-}\psi_{\xi}\circ Tr_{K/\mathbb{Q}_p},dx)^{\chi(1)}}.\] But by \cite[(3.2.6.1)]{Ta1} $\epsilon(   1_K,{\color{red}-}\psi_{\xi}\circ Tr_{K/\mathbb{Q}_p},dx)=d_{K/\mathbb{Q}_p}$ as the measure of $\mathcal{O}_K$ is normalised to be $1$ and the conductor $n({\color{red}-}\psi_{\xi}\circ Tr_{K/\mathbb{Q}_p})$ equals the exponent of the different of $K/\mathbb{Q}_p$. Note that there exists $\sigma\in G(\bar{\mathbb{Q}}_p/\mathbb{Q}_p^{nr})$ such that $\kappa(\sigma)\psi_{\mathbb{Q}_p}=-\psi_\xi$ where $\kappa$ denotes the cyclotomic character of $G_{\mathbb{Q}_p}$. Hence, using (2) in \cite[§3.2.2]{FK} we obtain
\begin{eqnarray*}
 \epsilon( \mathrm{Ind}_{K/\mathbb{Q}_{p}}( 1_K),{\color{red}-}\psi_{\xi},dx)&=&\epsilon( {\mathrm{Ind}_{K/\mathbb{Q}_{p}}( 1_K)}|\cdot|_p^{\frac{1}{2}},{\color{red}-}\psi_{\xi},dx)\sqrt{f(\mathrm{Ind}_{K/\mathbb{Q}_{p}}( 1_K))}\\
 &=&\epsilon( {\mathrm{Ind}_{K/\mathbb{Q}_{p}}( 1_K)}|\cdot|_p^{\frac{1}{2}}, \kappa(\sigma)\psi_{\xi},dx)\sqrt{f(\mathrm{Ind}_{K/\mathbb{Q}_{p}}( 1_K))}\\
 &=&\tau_{\mathbb{Q}_{p}}(\mathrm{Ind}_{K/\mathbb{Q}_{p}}( 1_K))\det( {\sigma};\mathrm{Ind}_{K/\mathbb{Q}_{p}}( 1_K))\\
 &=&\tau'\det(\bar{\sigma};\mathrm{Ind}_{K/\mathbb{Q}_{p}}( 1_K))
\end{eqnarray*}
by  \cite[(3.4.5)]{Ta1} . Using (3.4.4) in (loc.\ cit.) we also have
\begin{eqnarray*}
\epsilon(    \rho^{}_{\chi} ,{\color{red}-}\psi_{\xi}\circ Tr_{K/\mathbb{Q}_p},dx)&=&\epsilon(    \rho^{}_{\chi} , \kappa(\sigma)\psi_{\mathbb{Q}_{p}}\circ Tr_{K/\mathbb{Q}_p},dx)\\
&=& \mathrm{Det}_\chi(\sigma')\epsilon(    \rho^{}_{\chi} ,  \psi_{\mathbb{Q}_{p}}\circ Tr_{K/\mathbb{Q}_p},dx)
\end{eqnarray*}
for some $\sigma'\in G$.
\end{proof}

%\begin{equation} \label{eq:113}
%\partial(\delta_{K})-\partial(d_{K/\mathbb{Q}_{p}})-\partial(\iota(\tau_{\mathbb{Q}_{p}}(\mathrm{Ind}_{K/\mathbb{Q}_{p}}(\iota^{-1}\circ 1_{K})))),
%\end{equation}

% Then, and as $\tau_K(1_{K} )=1$ we have
%\begin{equation*}
%\tau_{\mathbb{Q}_{p}}(\mathrm{Ind}_{K/\mathbb{Q}_{p}}(\iota^{-1}\circ \chi^{-1}))=\tau_{K}(\iota^{-1}\circ \chi^{-1})\cdot (\tau')^{\chi(1)}.
%\end{equation*}
Moreover, by \cite[Lem.\ 4.29]{Breu} or \cite[Lem.\ 3.7]{Breu1} $\delta_{K}/\iota(\tau')\in (\mathbb{Z}^{ur}_{p})^{\times}$, hence $\eqref{reduction1}$  is equivalent to the existence of $w'\in K_1(\tilde{\Lambda})$ such that
%\begin{equation}\label{reduction3}
%\mathrm{Det}_\chi(w')= {f(\mathrm{Ind}_{K/\mathbb{Q}_{p}}(\chi^{-1}))\iota\left(\tau_{K}( \iota^{-1}\circ\chi^{-1})\right)}
%\end{equation}
% {\color{red} oder neue Variante}
%\begin{equation}\label{reduction2}
%\mathrm{Det}_\chi(w')= \frac{d_{K/\mathbb{Q}_p}^{\chi(1)}f(\mathrm{Ind}_{K/\mathbb{Q}_{p}}(  \rho^{*}_{\chi}))}{\epsilon(    \rho^{*}_{\chi} ,\psi_{\mathbb{Q}_p}\circ Tr_{K/\mathbb{Q}_p},dx) }
%\end{equation}
%{\color{red} oder weitere Variante, bei der $+\partial\epsilon$ statt $-\partial\epsilon$ genommen wird}
\begin{equation}\label{reduction4}
\mathrm{Det}_\chi(w')=  \frac{d_{K/\mathbb{Q}_p}^{\chi(1)}\mathcal{N}_{K/\mathbb{Q}_{p}}(b|\chi)}{\epsilon(    \rho^{}_{\chi} , \psi_K,dx)} = \frac{ \mathcal{N}_{K/\mathbb{Q}_{p}}(b|\chi)}{\tau_K(\rho_\chi) }
\end{equation}\label{express}
for all $\chi\in \mathrm{Irr}_{\bar{\mathbb{Q}_p}}(G)$ where the last equality holds by the following calculation
\begin{eqnarray*}
 \tau_K(\rho_\chi)&=&\epsilon( \rho^{}_{\chi}|\;|_p^{\frac{1}{2}}, \psi_K,dx_{ \psi_K}) f(\chi)^{\frac{1}{2}}\\
&=&\epsilon( \rho^{}_{\chi} , \psi_K,dx_{ \psi_K})f(\chi)^{-\frac{1}{2}} q_K^{-\chi(1)\frac{n( \psi_K)}{2}}f(\chi)^{\frac{1}{2}}\\
&=&\epsilon( \rho^{}_{\chi} , \psi_K,dx )  q_K^{-\chi(1) n(\psi_K) } \\
&=&\epsilon( \rho^{}_{\chi} , \psi_K,dx )  d_{K/\mathbb{Q}_p}^{-\chi(1)}
\end{eqnarray*}

using (3.4.5) in \cite{Ta1} for the second equality and the normalisation factor $\frac{dx_{ \psi_K}}{dx}= q_K^{-\chi(1)\frac{n( \psi_K)}{2}}$ in the third one.

% {\color{red}
%Consider the
%{\bf Assumption $(\tau,G)$:}\footnote{The same property must hold - as far as we can see - if the statement which Breuning proves in \cite{Breu} is equivalent to the local TNC for the representation $\mathbb{Q}_p(1)$, i.e., our theorem holds without this condition if and only if Breuning proved the LTNC for $\mathbb{Q}_p(1)$.} \[\left(\frac{\tau_K(\rho_\chi)}{ \tau_K(\rho_{\chi}^*)}\right)_\chi= \left(\frac{\epsilon( \rho^{*}_{\chi} ,{\color{red}-}\psi_K,dx )}{ \epsilon( \rho^{}_{\chi} ,{\color{red}-}\psi_K,dx )}\right)_\chi\mbox{ stems from a integral unit.}\]
%

By an old result of M.\ Taylor in Galois module theory (see \cite[(31)]{Breu}, \cite[(3.4)]{Breu1} or \cite[Thm 31]{Fr}) the term $\frac{ \mathcal{N}_{K/\mathbb{Q}_{p}}(b|\chi)}{\tau_K(\rho_\chi) } $ stems from a integral unit, because the non-ramified characteristic $^\sharp(-Fr_K e_I)$ in  (loc.\ cit.) is an integral unit itself. Upon comparing with the expression \eqref{reduction4} we now have proven the following theorem:
 }

%\begin{remark}
%Using Proposition \cite[Prop.\ 4.1]{Iz} we can establish  $C^{na}_{ep}(L/K,V)$ also for unramified extensions $L/K$ of degree prime to $p$. Moreover, from the functorial properties of $C^{na}_{ep}(L/K,V)$ (see \cite[Prop.\ 3.14(1)]{Iz}) it follows that also  $C^{na}_{ep}(L/K,V^{*}(1))$ is true for unramified extensions $L/K$ with $(\left|G\right|,p)=1$.
%\end{remark}
%
%Summarizing all above we get the main theorem of this section

\begin{theorem}[Main Theorem]
Let $K$ be a finite extension of $\mathbb{Q}_{p}$ and $L/K$ be a  {\color{red} (at most) tamely ramified} Galois extension with $G=G(L/K)$.   Let $\chi^{ur}:G_{K}\rightarrow \mathbb{Z}_{p}^{\times}$ be a continuous unramified character with $\chi^{ur}(G_{L})\neq 1$. Let $V$ be either $\mathbb{Q}_{p}(\chi^{ur})$ or $\mathbb{Q}_{p}(\chi^{ur})(1)$ -- a $p$-adic representation of $G_{K}$. Then the conjecture $C^{na}_{ep}(L/K,V)$ holds.
\end{theorem}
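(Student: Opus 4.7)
The strategy is to recognize that almost all the work has already been done in the preceding sections; it remains to assemble the reductions and invoke M.\ Taylor's classical theorem in Galois module theory, together with the compatibility of $C^{na}_{ep}(L/K,-)$ with Kummer duality in order to handle both choices of $V$ simultaneously.

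First I would treat the case $V=\mathbb{Q}_{p}(\chi^{ur})(1)$ directly. By Theorem \ref{th:100}, the conjecture is equivalent to the vanishing of $\partial(\theta)+\partial(\epsilon_D(L/K,V))$ in $K_0(\tilde{\Lambda},\widehat{\mathbb{Q}_p^{ur}})$. Using Lemma \ref{le:80} (which rewrites $\partial(\epsilon_D(L/K,V))$ up to the determinant of an element of $\Lambda^\times$ in terms of the inductive $\epsilon$-constants $\epsilon(\mathrm{Ind}_{K/\mathbb{Q}_p}(\rho_\chi),-\psi_\xi)^{-1}$) followed by Lemma \ref{lemma1} (which trades $-\psi_\xi$ for the self-dual additive character $\psi_K$ at the cost of factors that are detectable by $\mathrm{Det}_\chi$ of a unit of $\Lambda$), together with the fact $\delta_K/\iota(\tau')\in(\widehat{\mathbb{Z}_p^{ur}})^\times$ recalled in (\ref{reduction4}), the required vanishing is equivalent to the existence of some $w'\in K_1(\tilde{\Lambda})$ whose $\chi$-component satisfies
\begin{equation*}
\mathrm{Det}_\chi(w')=\frac{\mathcal{N}_{K/\mathbb{Q}_p}(b|\chi)}{\tau_K(\rho_\chi)}
\end{equation*}
for every $\chi\in\mathrm{Irr}_{\overline{\mathbb{Q}_p}}(G)$.

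At this point I would invoke M.\ Taylor's theorem (as formulated for instance in \cite[(31)]{Breu} or \cite[Thm.\ 31]{Fr}) stating that the ratio $\mathcal{N}_{K/\mathbb{Q}_p}(b|\chi)/\tau_K(\rho_\chi)$ is, up to a non-ramified characteristic $^\sharp(-Fr_K e_I)$, the $\chi$-component of the determinant of an element of $\Lambda^\times$; since that characteristic is itself an integral unit, the existence of $w'$ follows, completing the case $V=\mathbb{Q}_p(\chi^{ur})(1)$.

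For the case $V=\mathbb{Q}_p(\chi^{ur})$ I would deduce the result from the one already obtained via Kummer duality. The Kummer dual of $V$ is $V^*(1)=\mathbb{Q}_p((\chi^{ur})^{-1})(1)$, and the whole formulation of $C^{na}_{ep}$ is built so as to be compatible with the local duality theorems in Galois cohomology relating $R\Gamma(L,T)$ and $R\Gamma(L,T^*(1))$: the duality interchanges $\tilde{y}$ with its dual expression, while the functional equation $\epsilon(V)\epsilon(V^*(1))=1$ (in its equivariant form) matches on the $\epsilon$-side. Thus $C^{na}_{ep}(L/K,\mathbb{Q}_p(\chi^{ur}))\Leftrightarrow C^{na}_{ep}(L/K,\mathbb{Q}_p((\chi^{ur})^{-1})(1))$, and since the assumption $\chi^{ur}(G_L)\neq 1$ is equivalent to $(\chi^{ur})^{-1}(G_L)\neq 1$, the previously established case applies to $(\chi^{ur})^{-1}$ in place of $\chi^{ur}$.

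The main obstacle in this plan is the bookkeeping around Lemma \ref{lemma1} and the normalisation issues between $\psi_\xi$, $-\psi_\xi$ and $\psi_K$, together with the precise formulation of Taylor's fixed-point theorem: one must be careful that the unit of $\Lambda^\times$ it produces really detects the ratio on each $\chi$-component, and that the remaining non-ramified characteristic is absorbed into an element of $\Lambda^\times$ rather than only of $\tilde{\Lambda}^\times$. The duality step in the second case is formal once one knows the $\epsilon$-isomorphism of Fukaya--Kato is compatible with the dualities in Galois cohomology; this is standard and stated in \cite{FK}.
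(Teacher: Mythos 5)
Your proposal is correct and follows essentially the same route as the paper: the case $V=\mathbb{Q}_p(\chi^{ur})(1)$ is handled by assembling Theorem \ref{th:100}, Lemma \ref{le:80}, Lemma \ref{lemma1}, the unit relation $\delta_K/\iota(\tau')\in(\widehat{\mathbb{Z}_p^{ur}})^\times$ and Taylor's fixed-point theorem, while the case $V=\mathbb{Q}_p(\chi^{ur})$ is obtained by Kummer duality (the paper invokes \cite[Prop.\ 3.14(1)]{Iz} for this compatibility, but the content is the same as what you describe).
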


\begin{proof}
Only the case $V=\mathbb{Q}_{p}(\chi^{ur})$ is missing, which follows immediately from the functorial behaviour under taking Kummer dual (see \cite[Prop.\ 3.14(1)]{Iz}).
\end{proof}

%\begin{example}
%Set $L=\qp(\zeta_p)$ and $K=\qp.$ Then it is well known that $b=\zeta_p$ is a normal basis for $\zp[\zeta_p].$ For $\chi$ a character of $G\cong(\Z/p)^\times$ we calculate
%\[\mathcal{N}_{\qp/\mathbb{Q}_{p}}(\zeta_p|\chi)=\sum_{g\in G} g(\zeta_p)\chi(g^{-1})=\sum_{g\in G} g(\zeta_p)\overline{\chi}(g)=\tau_{\qp}(\overline{\chi}),\]
%i.e., the equation \eqref{reduction4} is trivially valid with $w'=1.$ Moreover, we conclude the validity of condition  $(\tau,G)$ as a consequence of Taylor's result. Is this a general phenomenon or only true in this special situation? Anyway, this is the reason that we ask for this condition in general (if some normalisation in our calculation is wrong, i.e., the equation \eqref{reduction4} has to be corrected to Taylor's relation, then - at least in this example - the property $(\tau,G)$ would  follow nevertheless; on the other hand one might wonder, whether the normalisation in Taylor's work are correct, if the property $(\tau,G)$ does not turn out to be true in general....)
%\end{example}

\appendix

\section{Compatibilities}

\label{app}

The reduced norm plays the crucial role in the work of Fr\"{o}hlich, Taylor,  Breuning etc.\  while in the description of Fukaya and Kato \cite{FK} it does not show up at all. In order to check whether both approaches coincide (in settings where both are defined and apply) one has to check certain compatibilities, which we are going to recall in this appendix.

We fix a finite group $G$. Let $F$ a finite extension of $\mathbb{Q}_p$ over which all the absolutely irreducible representations $\mathrm{Irr}(G)$ of $G$ are defined and set $A:=F[G]$. Every such representation $\rho$ induces a $F$-algebra homomorphism $T_\rho:A\to \mathrm{End}_K(V_\rho)$ sending the central idempotent $e_\rho$ to $\mathrm{id}_{V_\rho}.$  Consider the reduced norm isomorphism
\[nr: K_1(A)\cong \prod_{\rho\in \mathrm{Irr}(G)} F^\times =Z(A)^\times\] sending a class $[P,a]$ consisting of a projective, finitely generated (left) $A$-module $P$ together with an $A$-linear automorphism $a$ to the tuple $\left(\det_F\left(\mathrm{Hom}_A(V_\rho,a)\right)\right)_\rho$. Using the following lemma we get an alternative description of this map. Let $T$ be  finitely generated $\mathbb{Z}_p$-module with a continuous $G_K$-action and consider its deformation
\[\mathbb{T}:=\mathbb{Z}_p[G]\otimes_{\mathbb{Z}_p} T,\]
a $(\mathbb{Z}_p[G] , G_K)$-bimodule, where $\mathbb{Z}_p[G]$ acts by left multiplication on the left tensor factor while the action of $\sigma\in G_K$ is induced by $\sigma (\lambda\otimes t)=\lambda \bar{\sigma}^{-1}\otimes \sigma t$, where $\bar{\sigma}$ denotes the image of $\sigma$ in $G$.

\begin{lemma}
There are canonical isomorphisms
\[\mathrm{Hom}_A(V_\rho,A\otimes_{\mathbb{Z}_p[G]}\mathbb{T})\cong V_\rho^*\otimes_{\mathbb{Z}_p}T\cong V_\rho\otimes_{\mathbb{Z}_p[G]} \mathbb{T}\]
of $F[G_K]$-modules, where the $G_K$-action is induced by the action on $\mathbb{T}$ on the outer terms, while it is diagonally in the middle term ($V_\rho^*=V_{\rho^*}$ is considered as $G_K$-module via the projection onto $G$ here).
\end{lemma}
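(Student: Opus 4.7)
The plan is to derive both isomorphisms from the explicit decomposition $\mathbb{T} = \Lambda \otimes_{\mathbb{Z}_p} T$ as a left $\Lambda$-module (where $\Lambda = \mathbb{Z}_p[G]$), then use standard tensor-Hom adjunction / Frobenius reciprocity, and finally verify $G_K$-equivariance by direct calculation. The only substantive work is keeping track of the twisted $G_K$-action coming from the sharp.

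First, I would establish the $A$-linear isomorphism $A \otimes_{\Lambda} \mathbb{T} \cong A \otimes_{\mathbb{Z}_p} T$ via the canonical map $a \otimes (\lambda \otimes t) \mapsto a\lambda \otimes t$, inverse $x \otimes t \mapsto x \otimes (1 \otimes t)$. Because $V_\rho$ is finitely generated projective over the semisimple algebra $A$ and $T$ is finitely generated free over $\mathbb{Z}_p$, the functor $\mathrm{Hom}_A(V_\rho, -)$ commutes with $-\otimes_{\mathbb{Z}_p} T$, yielding
\[
\mathrm{Hom}_A(V_\rho, A \otimes_{\mathbb{Z}_p} T) \;\cong\; \mathrm{Hom}_A(V_\rho, A) \otimes_{\mathbb{Z}_p} T.
\]
Then invoke the standard identification $\mathrm{Hom}_A(V_\rho, A) \cong V_\rho^*$, sending $f \in V_\rho^*$ to the $A$-linear map $\hat f \colon v \mapsto \sum_{g\in G} f(g^{-1}v)\, g$ (this is $A$-linear because the sum is invariant after the change of variables $g \mapsto hg$).

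For the second isomorphism, I would analogously compute $V_\rho \otimes_{\Lambda} \mathbb{T} \cong V_\rho \otimes_{\Lambda}(\Lambda \otimes_{\mathbb{Z}_p} T) \cong V_\rho \otimes_{\mathbb{Z}_p} T$, after turning $V_\rho$ into a right $\Lambda$-module via the antipode $g \mapsto g^{-1}$, and then transport this via the canonical perfect trace pairing $V_\rho \otimes V_\rho^* \to A$ (or equivalently via the identity element in $\mathrm{End}(V_\rho) \cong V_\rho \otimes V_\rho^*$) to obtain the identification with $V_\rho^* \otimes_{\mathbb{Z}_p} T$.

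The main obstacle, and the only real content, is the verification of $G_K$-equivariance. Under the simplifications above, the twisted action $\sigma(\lambda \otimes t) = \lambda \bar\sigma^{-1}\otimes \sigma t$ on $\mathbb{T}$ pushes to $\sigma(x \otimes t) = x \bar\sigma^{-1}\otimes \sigma t$ on $A \otimes T$. The key identity to check is that for every $f \in V_\rho^*$ and $v \in V_\rho$ one has
\[
\hat f(v)\, \bar\sigma^{-1} \;=\; \widehat{\bar\sigma\!\cdot\! f}(v),
\]
where $(\bar\sigma \cdot f)(v) = f(\bar\sigma^{-1} v)$ is the $\rho^*$-action; this follows by the reindexing $g \mapsto g\bar\sigma$ in the defining sum of $\hat f$. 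Combined with the $G_K$-action on $T$, this shows the first isomorphism intertwines the Hom-action with the diagonal action on $V_\rho^*\otimes T$. An analogous computation, using the dual pairing in place of $\hat f$, handles the second isomorphism. Once these compatibilities are checked, naturality of all the constructions in $T$ and in $\rho$ is automatic.
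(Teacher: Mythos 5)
Your treatment of the first isomorphism is correct and is in fact exactly the paper's proof: the paper also gives the inverse map explicitly as $\omega\otimes t\mapsto\tilde\omega$ with $\tilde\omega(v)=\sum_{g\in G}\omega(g^{-1}v)g\otimes t$, which is your $\hat f$ tensored with $T$, and the reindexing check $\hat f(v)\bar\sigma^{-1}=\widehat{\bar\sigma\!\cdot\! f}(v)$ is precisely the ``easily checked'' verification the paper leaves to the reader.

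The second isomorphism, however, is not handled correctly. If you equip $V_\rho$ with the antipode right $\Lambda$-module structure $v\cdot g:=\rho(g)^{-1}v$, then $V_\rho\otimes_\Lambda\mathbb{T}\cong V_\rho\otimes_{\mathbb{Z}_p}T$ carries the $G_K$-action $\sigma(v\otimes t)=\rho(\bar\sigma)v\otimes\sigma t$ — that is, $V_\rho$ with the $\rho$-action, not $V_\rho^*$ with the contragredient action. These two are not $G_K$-isomorphic for a general irreducible $\rho$, and your proposed fix via ``the trace pairing $V_\rho\otimes V_\rho^*\to A$'' or ``the identity in $\mathrm{End}(V_\rho)\cong V_\rho\otimes V_\rho^*$'' does not produce a $G_K$-equivariant isomorphism $V_\rho\to V_\rho^*$: the trace pairing is an $F$-valued invariant \emph{pairing} and the identity element is a $G$-invariant \emph{element} of $V_\rho\otimes V_\rho^*$, but neither yields a $G$-isomorphism between the non-isomorphic representations $\rho$ and $\rho^*$.

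The point is that in $V_\rho\otimes_{\mathbb{Z}_p[G]}\mathbb{T}$ the right $\Lambda$-module structure is not the antipode structure on the left module $V_\rho$; it is the structure of the $A$-linear dual $\mathrm{Hom}_A(V_\rho,A)$, i.e.\ precisely the convention under which the functors $\mathrm{Hom}_A(V_\rho,-)$ and $V_\rho\otimes_A-$ agree (as the appendix records via $nr$). Under the trace form of the Frobenius algebra $A$ one has a right-module isomorphism $\mathrm{Hom}_F(V_\rho,F)\cong\mathrm{Hom}_A(V_\rho,A)$, $\omega\mapsto\tilde\omega$, and this is where the trace pairing actually enters — it identifies $V_\rho^{\vee}:=\mathrm{Hom}_A(V_\rho,A)$ with $V_\rho^*$, not $V_\rho$ with $V_\rho^*$. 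With this reading, the second isomorphism is a repackaging of the first: $V_\rho\otimes_\Lambda\mathbb{T}=V_\rho^{\vee}\otimes_\Lambda\mathbb{T}\cong\mathrm{Hom}_A(V_\rho,A\otimes_\Lambda\mathbb{T})$ by projectivity of $V_\rho$, and tracking the twisted $G_K$-action as you already did for the first isomorphism shows both are the diagonal module $V_\rho^*\otimes_{\mathbb{Z}_p}T$. You should replace the antipode-on-$V_\rho$ step and the trace-transport by this identification.
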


\begin{proof}
The inverse of the first isomorphism is induced by sending $\omega\otimes t$ to the map \[\tilde{\omega}: V_\rho\to A\otimes_{\mathbb{Z}_p[G]}\mathbb{T},\;\;\;\; v\mapsto \sum_{g\in G} \omega(g^{-1}v)g\otimes t. \] Now the statement is easily checked.
\end{proof}

Hence we get immediately that
\[nr([P,a])=\left(\det_F\left(\mathrm{Hom}_A(V_\rho,a)\right)\right)_\rho=\left(\det_F\left( V_\rho\otimes_A a)\right)\right)_\rho.\] In particular, the $\rho$-component can also be determined in the way of Fukaya and Kato \cite{FK}. Furthermore we have a commutative diagram

\[\xymatrix{
  A^\times \ar[rr]^{can} \ar[dr]_{Nr}
                &  &    K_1(A) \ar[dl]^{nr}    \\
                & {\prod_{\rho\in\mathrm{Irr}(G)} F^\times}                 }\]

  where $can$ sends $a\in A$ to $[A,a]$   considering now $a$ as right multiplication by it while
  \[Nr:A^\times \to {\prod_{\rho\in\mathrm{Irr}(G)} F^\times}\] sends $a$ to $\left(\det_F(T_\rho(a))\right)_\rho$.

\bibliographystyle{amsalpha}

\end{document}